     \def\section{\@startsection{section}{1}%
     \z@{.7\linespacing\@plus\linespacing}{.5\linespacing}%
     {\bfseries
     \centering
     }}
     \def\@secnumfont{\bfseries}
\newcommand{\al}{\alpha}
\newtheorem{theorem}{Theorem}[section]
\newtheorem{lemma}[theorem]{Lemma}
\theoremstyle{definition}
\theoremstyle{remark}
\newtheorem{remark}{Remark}
\numberwithin{equation}{section}
\begin{document}
\title[FSDE with  discontinuous diffusion]{Fractional stochastic differential equation with  discontinuous diffusion}
\author[J.Garz\'on]{Johanna Garz\'on}
 \address{Departamento de Matem\'aticas, Universidad Nacional de Colombia, Carrera 45 No 26-85 Bogot\'a, Colombia. }
  \email{mjgarzonm@unal.edu.co}
\author[J.A. Le\'on]{Jorge A. Le\'on}
\address{Departamento de Control Autom\'atico, Cinvestav-IPN, 
Apartado Postal 14-740, 07000 M\'exico D.F., Mexico}
\email{jleon@ctrl.cinvestav.mx}
\author[S.Torres]{Soledad Torres}
\address{Facultad de Ingenier\'ia, CIMFAV  Universidad de Valpara\'iso, Casilla 123-V, 4059 Valparaiso, Chile. }
 \email{soledad.torres@uv.cl}


\begin{abstract}
In this paper we study a stochastic differential equation driven by a fractional Brownian motion with a discontinuous coefficient. We also give an approximation to the solution of the equation. This is a first step to define a fractional version of the  skew Brownian motion.
\end{abstract}

\maketitle

\medskip\noindent
{\bf Mathematics Subject Classifications (2000)}: Primary 60H15; secondary 65C30.

\medskip\noindent
{\bf Keywords:} Fractional Brownian motion,   Fractional calculus, 
Pathwise differential  equations, Young integral

\allowdisplaybreaks

\section{Introduction}
In this article we show  existence and uniqueness for a stochastic differential equation driven by a fractional Brownian motion with Hurst parameter $H >1/2$ and  a  discontinuous diffusion coefficient.  
Recall that a fractional Brownian motion $B^H$, is a centered Gaussian process with covariance structure  given by:
$$
R_H(t,s) = \mathbb{E}(B_t^H B_s^H) = \frac12 \left\{ |t|^{2H}  + |s|^{2H}| - |t-s|^{2H}  \right\} .
$$ 
The particular case  $H= 1/2$ results the Brownian motion. 

The equation we will consider in this article is  given by:
\begin{equation}\label{DISC}
x_t = x_0 + \int_0^t \sigma (x_s) d B^H_s  , \quad t \geq 0;
\end{equation}
where $\sigma$ is a discontinuous function given by
$$\sigma (x) = \frac{1}{\alpha} 1\{x \geq 0\} + \frac{1}{1-\alpha} 1 \{x < 0\}, $$
for some $\alpha \in (0,1)$. Note that,
without loss of generality, we can assume that $\alpha < 1/2$. 
The following picture shows the function $\sigma$. 


\newpage
\begin{figure}[htbp]
\centering
{\includegraphics[width=53mm] {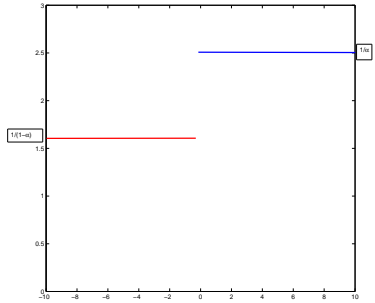}}
\caption{$\sigma(x)$ for $\alpha=0.4$}\label{}
\end{figure}

Nakao \cite{Nakao} in 1972, solved the problem of pathwise  uniqueness of solutions of SDE driven by Brownian motion, ($H=1/2$),  and with a  diffusion coefficient  uniformly positive and of bounded variation on any compact interval. The author proved  the pathwise uniqueness holds for such equation.  After that, many works have been developed in this direction, in order to show existence and/or uniqueness for SDE with general coefficients in the diffusion. We can refer here  \cite{Ouknine} and \cite{Lejay} and the references therein among others. 
 In the case $H > 1/2$, the only cases of stochastic differential equations driven a fractional Brownian motion with discontinuous coefficients which have been studied are those corresponding to discontinuous drift coefficient (for  $H>1/2$).  Regarding that, in  \cite{MN}, the authors studied a H\"older continuous drift except on a finite numbers of points. Other class of discontinuity in SDE driven a fractional Brownian motion is related to add to a Poisson process for the SDE. In \cite{BM}, the authors proved the strong solution of this kind of SDE driven by fBm and Poisson point process extending  the results given in \cite{MN}.

The main interest in working in this type of equations is related with the problem to define a fractional version of the  Skew Brownian motion. In the Brownian motion framework the Skew Brownian motion appeared  as a natural generalization of the Brownian motion. The Skew Brownian motion  is a process that behaves like a Brownian motion except that the sign of each excursion is chosen using an independent Bernoulli random variable of parameter $p \in (0 , 1)$.  For $p = 1/2$ the process corresponds to a Brownian motion. This process is a markov process,  semi-martingale which is a strong solution to some Stochastic Differential Equation (SDE) with local time, (see \cite{Lejay} for a survey).
\begin{equation}\label{SB}
X_t = x + B_t + (2p - 1) L^0_t(X).
\end{equation}
In the case of Brownian motion and  from  It$\hat{o}$-Tanaka formula, equation (\ref{DISC}) and  (\ref{SB}) are equivalent. In the context of fractional Brownian motion, since Tanaka formula only exists for fractional Brownian motion and some functionals on it, there is no relation between both equations.  Is in this sense and until our knowledge this work corresponds to the first step in this direction.


We organized our paper as follows. In Section 2, we give some preliminares related to fractional calculus. Section 3 is devoted to analyze the problem according to the initial condition and the Scheme approximation. The main results is  presented in Section 4. A simulation study is reported in last Section.

\section{Preliminaries}
This section is devoted to introduce an extension of Young's integral defined by
Z\"ahle in \cite{zahle}. Before giving the definition of this integral, we 
establish some notations and definitions that we use in this paper.

Consider $0\le a<b\le T$, $\alpha\in(0,1)$, $p>1$  and $f\in L^{p}([0,T])$. For
 $t\in[a,b]$ we set
\begin{equation}\label{def:der+}
D_{a+}^{\alpha} f({t})
=L^p-\lim_{\varepsilon\downarrow0}\frac{1}{\Gamma(1-\alpha)}
\left(
\frac{f_{t}}{(t-a)^{\alpha}}
+ \alpha \int_{a}^{t-\varepsilon} \frac{f_{t}-f_{r}}{(t-r)^{1+\alpha}} \, dr
\right),
\end{equation}
in case that this limit is well-defined,
where we use the convention $f_r=0$ on $[a,b]^c$. In this case
$D_{a+}^{\alpha} f$ is called the left-fractional derivative of $f$ of order $\alpha$.
Similarly, for   $f\in L^{p}([0,T])$ and $t\in[a,b]$, the right-fractional
derivative of $f$ of order $\alpha$ is introduced as
\begin{equation}\label{def:der-}
D_{b-}^{\alpha} f(t)
=L^p-\lim_{\varepsilon\downarrow0}\frac{1}{\Gamma(1-\alpha)}
\left(
\frac{f_{t}}{(b-t)^{\alpha}}
+ \alpha \int_{t+\varepsilon}^{b} \frac{f_{t}-f_{r}}{(r-t)^{1+\alpha}} \, dr
\right).
\end{equation}
It is not difficult to see that, as a consequence of the proof of
\cite[Theorem 13.2]{Sam}, the fact that $f$, $\frac{f(\cdot)}
{(\cdot-a)^{\alpha}}$ and $\int_a^{\cdot}\frac{f(\cdot)-f_r}{(\cdot-r)^{1+\alpha}}dr$
(resp. $\frac{f(\cdot)}
{(b-\cdot)^{\alpha}}$ and $\int_{\cdot}^b\frac{f(\cdot)-f_r}{(r-\cdot)^{1+\alpha}}dr$)
belong to $L^p([a,b])$ implies that $D_{a+}^{\alpha} f$ (resp.
$D_{b-}^{\alpha} f$) is well-defined and 
\begin{equation}\label{eq:def-Da-plus}
D_{a+}^{\alpha} f({t})
=\frac{1}{\Gamma(1-\alpha)}
\left(
\frac{f_{t}}{(t-a)^{\alpha}}
+ \alpha \int_{a}^{t} \frac{f_{t}-f_{r}}{(t-r)^{1+\alpha}} \, dr
\right),
\end{equation}
(resp.
$$D_{b-}^{\alpha} f({t})
=\frac{1}{\Gamma(1-\alpha)}
\left(
\frac{f_{t}}{(b-t)^{\alpha}}
+ \alpha \int_{t}^{b} \frac{f_{t}-f_{r}}{(r-t)^{1+\alpha}} \, dr
\right)).$$

The space of all the $\alpha$-H\"older continuous functions on $[a,b]$ is denoted by
$C^{\alpha}([a,b])$. Then if $f\in C^{\alpha}([a,b]) $,  the norm of $f$ is defined as follows
$$\left\|f\right\|_{\alpha, [a,b]}:= \left\|f\right\|_{\infty} + \sup_{a\leq s < t \leq b} \frac{|f(t)-f(s)|}{(t-s)^{\lambda}},$$
where $ \left\|f\right\|_{\infty}= \sup_{a\leq t \leq b} |f(t)|$.

 Note that if $f$ belongs to
 $C^{\alpha+\varepsilon}([a,b])$, with
$\varepsilon>0$, then (\ref{eq:def-Da-plus}) is true.

Let $g, f\in L^p([0,T])$ be two functions
 and
 $g_r^{b-}=g_r-g_{b-}$. In this case we say that
$f$ is integrable with respect to $g$ if and only if 
$D_{a+}^{\alpha} f$ and $D_{b-}^{1-\alpha} g^{b-}$ exist, and

$(D_{a+}^{\al} f)D_{b-}^{1-\alpha} g^{b-}\in L^{1}([a,b])$. In this case we 
define the integral $\int_a^b f\, dg$ in the following way
\begin{equation}\label{eq:def-intg-frac}
\int_{a}^{b} f_{r}\, dg_{r}
:=
\int_{a}^{b} (D_{a+}^{\al} f)(r) D_{b-}^{1-\al}g^{b-}(r) \, dr.
\end{equation}

 It is well-known that if $f\in C^{\mu}([a,b])$ and 
 $g\in C^{\beta}([a,b])$, with $\mu+\beta>1$, then it
  can be checked that $\int_{a}^{b} f_{r}\, dg_{r}$ is well-defined, and that it coincides with the Young's integral defined as a limit of Riemann sums.
\section{The Stochastic  Differential Equation }
In this section we  study the solution of the  stochastic differential equation driven by a fBm with a discontinuous diffusion coefficient (\ref{DISC}) as an approximation of a SDE. 
The integral in (\ref{DISC}) is defined pathwise as a Young integral.
\subsection{Initial condition }
We divide our problem according to the initial condition $x_0$ in the following three cases: 
\begin{enumerate}
\item[i)] $ x_0 > 0$,
\item[ii)] $x_0 < 0$,
\item[iii)] $x_0 = 0$.
\end{enumerate}
Note that if  equation (\ref{DISC}) has a continuous solution $x$, then 
it satisfies the following:
{\bf Case i)}  The continuity of $x$ implies that 
 there exits $t_0 > 0$ such that $x_t> 0$ on $(0, t_0)$. Hence, 
\begin{eqnarray*}
x_t = x_0 + \int_0^t \frac{1}{\alpha} d B_s^H = x_0 + \frac {1}{\alpha} B_t^H \ , \quad t \leq t_0.
\end{eqnarray*}
Let $\tilde{t}_0$ be the first instant such that $x_{\tilde{t}_0} = 0$. Then, 
\begin{eqnarray}
\label{eq2}x_t = x_0 + \frac{1}{\alpha} B^H_{\tilde{t}_0} + \int_{\tilde{t}_0}^t \sigma (x_s) d B^H_s= \int_{\tilde{t_0}}^t \sigma (x_s) d B_s^H \ , \quad t \geq \tilde{t}_0.
\end{eqnarray}

{\bf  Case ii)} Similarly,  we have that, there is $t_1 > 0$ such that $x_t < 0$ on $(0 , t_1)$.

Consequently,

\begin{eqnarray*}
x_t = x_0 + \int_0^t \sigma (x_s) d B^H_s = x_0 + \frac {1}{1 - \alpha} B_t^H \ , \quad t \leq t_1.
\end{eqnarray*}

Again, let $\tilde{t}_1$ be the first time such that $x_{\tilde{t}_1} = 0$. Then,
\begin{eqnarray} \label{eq3}
x_t = \int_{\tilde{t}_1}^t \sigma (x_s) d {B}_s^H \quad , \quad t \geq \tilde{t}_1.
\end{eqnarray}

So, we only need to consider the existence of a unique continuous solution to
equation (\ref{eq3}), for any ${\tilde{t}_1}\ge 0$. 


\subsection{ Notation and auxiliary results } In this section,  we introduce the sequence of continuous  functions   $\{\sigma_n : \mathbb{R} \rightarrow \mathbb{R}_+ : n \in \mathbb{N} \}$ converging to $\sigma$ given by:
\begin{eqnarray*}
\sigma_n(x):= \begin{cases} \sigma(x) & \text{if} \ x\notin (-1/n, 0) \\
\frac{1}{\alpha} + n \frac{1 - 2\alpha}{\alpha(1 - \alpha)} x & \text{otherwise}.
\end{cases}
\end{eqnarray*}
Figure 2 shows the approximation function. 
\begin{figure}[h]
{\includegraphics[width=150mm] {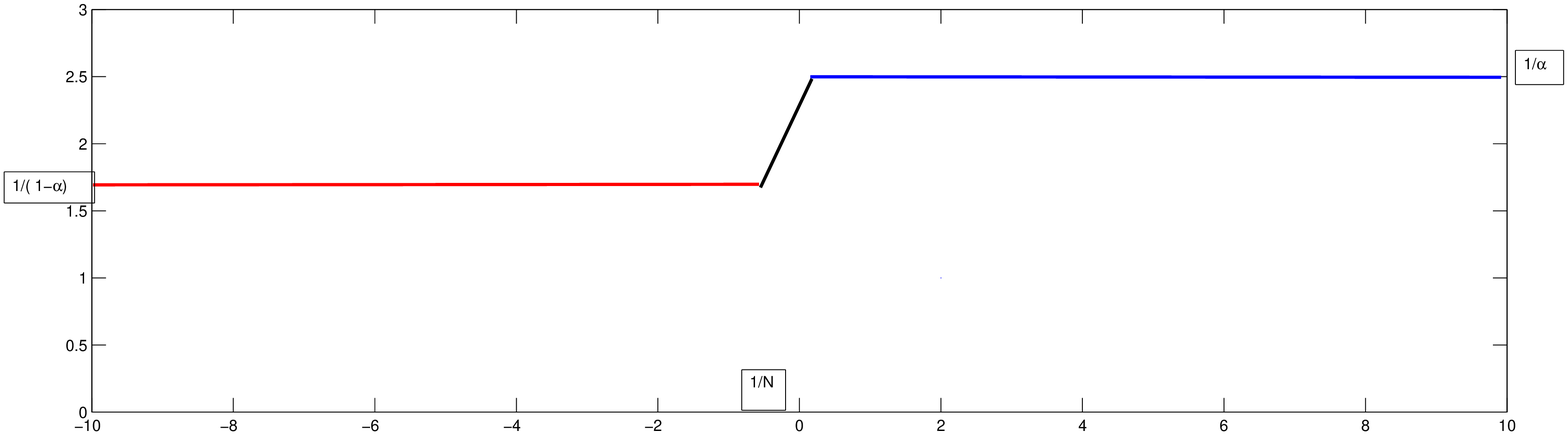}}
\caption{$\sigma^n(x)$ for  $\alpha =0.4$.}
\label{sigman}
\end{figure}

First, note that,  $\sigma_n$ is a Lipschitz continuous function with constant $n \frac{1 - 2\alpha}{\alpha (1 - \alpha)}$,  it means,
\begin{eqnarray}\label{eq4}
| \sigma_n (x) - \sigma_n (y) | \leq n \frac{1 - 2\alpha}{\alpha (1 - \alpha)} |x - y| \quad , \quad x,y \in \mathbb{R}.
\end{eqnarray}

Now we define, for $a \in \mathbb{R}$ fixed, 
\begin{eqnarray}
\label{deflambdan}
\Lambda_n (x) = \int_a^x \frac{ds}{\sigma_n (s)},\quad x\in\mathbb{R}.
\end{eqnarray}
This integral is well defined due to $\sigma_n>0$. Moreover, $\Lambda_n \in C^1(\mathbb{R})$ and $\Lambda'_n (x) = \frac{1}{\sigma_n (x)} > 0$, for all $x\in\mathbb{R}$.
 Therefore $\Lambda_n$ is a strictly increasing function and uniformly bounded in compacts. As a consequence, $\Lambda_n$ has an inverse $\Lambda_n^{-1}$.
 
Set 
\begin{equation}\label{def:L-1}
\Lambda (x) = \int_a^x \frac{ds}{\sigma (s)}  , \quad x \in \mathbb{R}.
\end{equation}

\begin{remark}
Since $\Lambda$ is a strictly increasing function,  then $\Lambda^{-1}$ exists.
\end{remark}

We will need the following auxiliary result later on.

\begin{lemma}\label{propaprox}
\label{lemma1} There exists a positive constant $C_\alpha$
such that, for  $x\in\mathbb{R}$ and $n\in\mathbb{N}$,
$$\left|\Lambda_n(x)-\Lambda(x)\right|\le \frac{C_{\alpha}}{n}\quad
\hbox{\rm and}\quad
\left|\Lambda_n^{-1}(x)-\Lambda^{-1}(x)\right|\le \frac{C_{\alpha}}{n}.$$
\end{lemma}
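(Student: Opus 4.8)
The plan is to exploit the fact that $\sigma_n$ and $\sigma$ coincide outside the short interval $(-1/n,0)$, so that the first estimate reduces to controlling a single integral over an interval of length $1/n$. Writing the difference as
\[
\Lambda_n(x)-\Lambda(x)=\int_a^x\left(\frac{1}{\sigma_n(s)}-\frac{1}{\sigma(s)}\right)ds,
\]
I would first observe that the integrand vanishes for $s\notin(-1/n,0)$, since there $\sigma_n(s)=\sigma(s)$. Consequently, no matter where $a$ and $x$ lie, the integral only picks up the contribution coming from the overlap of the interval of integration with $(-1/n,0)$, whence
\[
|\Lambda_n(x)-\Lambda(x)|\le\int_{-1/n}^0\left|\frac{1}{\sigma_n(s)}-\frac{1}{\sigma(s)}\right|ds.
\]

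Next I would bound the integrand uniformly in $n$. Because $\alpha<1/2$, the affine function $\sigma_n$ is increasing on $(-1/n,0)$ and takes values in $[\frac{1}{1-\alpha},\frac{1}{\alpha}]$, so that $\frac{1}{\sigma_n(s)}\in[\alpha,1-\alpha]$; since $\frac{1}{\sigma(s)}=1-\alpha$ on this interval, the integrand is bounded by $(1-\alpha)-\alpha=1-2\alpha$. Multiplying by the length $1/n$ yields $|\Lambda_n(x)-\Lambda(x)|\le (1-2\alpha)/n$, which is the first claim with $C_\alpha=1-2\alpha$.

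For the second estimate I would transfer the bound through the inverse maps using that $\Lambda^{-1}$ is globally Lipschitz. Indeed $\Lambda'=1/\sigma\ge\alpha$ because $\sigma\le 1/\alpha$, so $\Lambda^{-1}$ is Lipschitz with constant $1/\alpha$. Setting $y=\Lambda_n^{-1}(x)$, so that $x=\Lambda_n(y)$ and $y=\Lambda^{-1}(\Lambda(y))$, I would write
\[
\Lambda_n^{-1}(x)-\Lambda^{-1}(x)=\Lambda^{-1}(\Lambda(y))-\Lambda^{-1}(\Lambda_n(y)),
\]
and then combine the Lipschitz property of $\Lambda^{-1}$ with the first part to get
\[
|\Lambda_n^{-1}(x)-\Lambda^{-1}(x)|\le\frac{1}{\alpha}\,|\Lambda(y)-\Lambda_n(y)|\le\frac{1}{\alpha}\cdot\frac{C_\alpha}{n}.
\]
As $x$ ranges over $\mathbb{R}$ so does $y=\Lambda_n^{-1}(x)$, hence the estimate holds for every $x$; replacing $C_\alpha$ by $\max\{1-2\alpha,(1-2\alpha)/\alpha\}$ then gives a single constant valid for both inequalities.

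The pointwise estimate of the integrand is entirely routine; the only points requiring care are the observation that the first bound is independent of $a$ and $x$ (which is what makes the localization to $(-1/n,0)$ work uniformly in $n$), and the clean transfer to the inverses, which hinges on the uniform lower bound $\Lambda'\ge\alpha$, equivalently on the uniform upper bound $\sigma\le 1/\alpha$.
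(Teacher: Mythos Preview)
Your proof is correct and considerably cleaner than the paper's. The paper proceeds by writing down explicit piecewise formulas for $\Lambda$, $\Lambda_n$, $\Lambda^{-1}$ and $\Lambda_n^{-1}$ (involving logarithms and exponentials for $\Lambda_n$ and its inverse) and then asserts that from these expressions ``it is easy to finish the proof,'' leaving the actual estimates implicit. You instead localize the difference $\Lambda_n-\Lambda$ to the interval $(-1/n,0)$ and use only the uniform bounds $\alpha\le 1/\sigma_n\le 1-\alpha$ there; this avoids all explicit computation and makes the constant $(1-2\alpha)$ transparent. For the inverses, the paper again relies on its closed formulas, whereas you transfer the first bound via the Lipschitz constant $1/\alpha$ of $\Lambda^{-1}$, derived directly from $1/\sigma\ge\alpha$. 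It is worth noting that in the paper the Lipschitz property of $\Lambda^{-1}$ is stated \emph{after} this lemma (Remark~\ref{lipschitz}) and is obtained as a consequence of it, via the uniform Lipschitz bound on $\Lambda_n^{-1}$; your argument reverses the logic, deriving the Lipschitz bound for $\Lambda^{-1}$ first and using it to prove the lemma, which is perfectly legitimate and arguably more natural.
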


\begin{proof}
The definitions of $\sigma$ and $\sigma_n$, together with (\ref{deflambdan}) and
(\ref{def:L-1}), imply that
$$\Lambda(x)=\alpha(x-a)1_{\{x\ge 0\}}+((1-\alpha)x-a\alpha)1_{\{x< 0\}}$$
and
\begin{eqnarray*}
\Lambda_n(x)&=&\alpha(x-a)1_{\{x\ge 0\}}\\
&&
+\left(-a\alpha-\frac{\alpha(1-\alpha)}{n(1-2\alpha)}\left(\log(\frac{1}{\alpha})
-\log(\frac{1}{\alpha}+n\frac{1-2\alpha}{\alpha(1-\alpha)}(\frac{-1}{n}\vee x)
\right)\right.\\
&&+\left.(1-\alpha)((\frac{-1}{n}\wedge x)+\frac{1}{n}\right)1_{\{x< 0\}}.
\end{eqnarray*}
Consequently,
\begin{equation}\label{calL-1}
\Lambda^{-1}(x)=\left(\frac{x}{\alpha}+a\right)1_{\{x\ge -a\alpha\}}+
\left(\frac{x+a\alpha}{1-\alpha}\right)1_{\{x\le -a\alpha\}}\end{equation}
and
\begin{eqnarray*}
\Lambda^{-1}_n(x)&=&\left(\frac{x}{\alpha}+a\right)1_{\{x\ge -a\alpha\}}
\\
&&+\frac{1}{\alpha}\left(\exp\left(\frac{n(1-2\alpha)(x+a\alpha)}
{\alpha(1-\alpha)}\right)-1\right)\frac{\alpha(1-\alpha)}{
n(1-2\alpha)}1_{\{\alpha_{n}<x<-a\alpha\}}\\
&&+\left(\frac{x-\alpha_n}{1-\alpha}-\frac{1}{n}\right)
1_{\{x\le \alpha_n\}},
\end{eqnarray*}
with
$$\alpha_n=-a\alpha-\frac{\alpha(1-\alpha)}{n(1-2\alpha)}\left(\log(\frac{1}{\alpha})
-\log(\frac{1}{1-\alpha})\right).$$
Now, it  is easy  to finish  the  proof.

\end{proof}

\begin{remark}
\label{lipschitz} $\Lambda^{-1}$ is a Lipschitz function on $\mathbb{R}$.
Indeed,
\begin{eqnarray}
 \label{eq7}
| \Lambda_n^{-1} (x) - \Lambda_n^{-1} (y) | &\leq & \sup_{z \in K, n\in \mathbb{N}}  | (\Lambda_n^{-1} (z))'| |x - y|\notag\\
&=& \left( \sup_{z \in K, n\in \mathbb{N}} | \sigma_n (\Lambda_n^{-1} (z)) | \right) 
 |x - y|
\notag\\
 &\leq& \left(\frac{1}{\alpha} + \frac{1}{1 - \alpha} \right)  |x-y|. 
\end{eqnarray}

Thus the claim is a consequence of Lemma \ref{lemma1}. 

\end{remark}

We want to see   that $x_t = \Lambda^{-1} (B_t^H - B_a^H + \bar{z})$,
where $\bar{z} = \Lambda (0)$,
 is a solution of the equation 
\begin{eqnarray}
\label{eq8}
x_t = \int_a^t \sigma (x_s) d B_s^H \quad, \quad t \geq a.
\end{eqnarray}
In order to study the uniqueness of the solution to $(\ref{eq8})$, we consider the following auxiliary result.


\begin{lemma}
\label{lemma4} 
Let $\tilde{\alpha} > 1 - H$ and $\gamma < H$ such that $\tilde{\alpha} > 1 - \gamma$. Then,
 \begin{eqnarray*}
| D_{t-}^{1- \widetilde{\alpha}} \left( B^H \right)^{t-}_s| 
 &\leq&
  C_{\tilde{\alpha}} ||B^H||_{\gamma,[0,T]} (t-s)^{\tilde{\alpha} + \gamma - 1}\\
  &\leq&
  C_{\tilde{\alpha}} ||B^H||_{\gamma,[0,T]} T^{\tilde{\alpha} + \gamma - 1},
   \quad 0 \leq s \leq t \leq T,
   \end{eqnarray*}
\end{lemma}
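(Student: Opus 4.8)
The plan is to expand the right-fractional derivative directly from its definition \eqref{def:der-} and then estimate the two resulting terms using the $\gamma$-H\"older continuity of $B^H$. Writing the order as $1-\tilde\alpha$ and setting $g_r=(B^H)^{t-}_r=B^H_r-B^H_{t-}$, I would first use the (almost sure) continuity of the fractional Brownian motion to replace $B^H_{t-}$ by $B^H_t$, so that $g_s-g_r=B^H_s-B^H_r$ and
$$
D_{t-}^{1-\tilde\alpha}(B^H)^{t-}_s
=\frac{1}{\Gamma(\tilde\alpha)}\left(\frac{B^H_s-B^H_t}{(t-s)^{1-\tilde\alpha}}
+(1-\tilde\alpha)\int_s^t\frac{B^H_s-B^H_r}{(r-s)^{2-\tilde\alpha}}\,dr\right).
$$

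Next I would bound each piece separately. For the boundary term the estimate $|B^H_s-B^H_t|\le\|B^H\|_{\gamma,[0,T]}(t-s)^{\gamma}$ immediately gives a contribution of order $(t-s)^{\tilde\alpha+\gamma-1}$. For the integral term the same H\"older bound yields the pointwise estimate $|B^H_s-B^H_r|\le\|B^H\|_{\gamma,[0,T]}(r-s)^{\gamma}$, reducing the integral to $\int_s^t(r-s)^{\gamma+\tilde\alpha-2}\,dr$, and after the substitution $u=r-s$ this becomes $\int_0^{t-s}u^{\gamma+\tilde\alpha-2}\,du$.

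The point at which the hypotheses enter — and really the only delicate step — is the convergence and evaluation of this last integral. The assumption $\tilde\alpha>1-\gamma$ is exactly what guarantees $\gamma+\tilde\alpha-2>-1$, so that the singularity at $u=0$ is integrable and $\int_0^{t-s}u^{\gamma+\tilde\alpha-2}\,du=(t-s)^{\gamma+\tilde\alpha-1}/(\gamma+\tilde\alpha-1)$. Collecting the two contributions then produces a constant $C_{\tilde\alpha}$ depending only on $\tilde\alpha$ and $\gamma$ (through $1/\Gamma(\tilde\alpha)$ and the factor $1+(1-\tilde\alpha)/(\gamma+\tilde\alpha-1)$) multiplied by $\|B^H\|_{\gamma,[0,T]}(t-s)^{\tilde\alpha+\gamma-1}$, which is the first claimed inequality.

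Finally, the same hypothesis $\tilde\alpha>1-\gamma$ also gives $\tilde\alpha+\gamma-1>0$, so the exponent is positive and $(t-s)^{\tilde\alpha+\gamma-1}\le T^{\tilde\alpha+\gamma-1}$ whenever $0\le s\le t\le T$, which yields the second inequality. I expect no substantial obstacle beyond careful bookkeeping of the singular exponent; the remaining conditions $\tilde\alpha>1-H$ and $\gamma<H$ serve only to ensure that a $\gamma$-H\"older continuous version of $B^H$ with finite norm $\|B^H\|_{\gamma,[0,T]}$ actually exists, so that the right-hand side is finite.
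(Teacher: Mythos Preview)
Your proposal is correct and follows essentially the same route as the paper: expand $D_{t-}^{1-\tilde\alpha}(B^H)^{t-}_s$ from the definition, bound each of the two terms by the $\gamma$-H\"older seminorm of $B^H$, and use $\tilde\alpha>1-\gamma$ to integrate $(r-s)^{\gamma+\tilde\alpha-2}$ and to pass to the uniform bound $T^{\tilde\alpha+\gamma-1}$. Your presentation is in fact slightly more explicit than the paper's about where each hypothesis is used.
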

where $(B^H)_s^{t-} = B_s^H - B^H_t$.

\begin{proof}
By  definition of $D_{t-}^{1-\alpha}$ (see equalities  (\ref{def:der+})
and (\ref{eq:def-Da-plus})), we have
\begin{eqnarray*}
\bigg| D_{t-}^{1- \widetilde{\alpha}} \left( B^H \right)_s^{t-} \bigg| &\leq& \frac{1}{\Gamma (\tilde{\alpha})} \bigg( \frac{|B_s^H - B_t^H|}{(t-s)^{1-\tilde{\alpha}}} + (1-\tilde{\alpha}) \int_s^t \frac {|B_s^H - B_r^H|}{(r - s)^{2-\tilde{\alpha}}} dr \bigg)\\
&\leq& C_{\tilde{\alpha}} \bigg( ||B^H||_{\gamma,[0,T]} |t - s|^{\tilde{\alpha} + \gamma - 1} +  ||B^H||_{\gamma,[0,T]} \int_s^t (r - s)^{\tilde{\alpha} + \gamma - 2} dr \bigg)\\
&\leq& C_{\tilde{\alpha}}  ||B^H||_{\gamma,[0,T]} |t - s|^{\tilde{\alpha} + \gamma - 1}. \nonumber \\
\end{eqnarray*} 
Consequently, the proof is complete.
\end{proof}

\subsection{Existence and uniqueness for equation (\ref{DISC}) } 
In this section we state the main results of this article. The first one is related to the existence of a solution of the  SDE (\ref{DISC}), and the second one with the uniqueness
of the solution. 

\begin{theorem}{\bf Existence:}
\label{theorem1}Let $\gamma \in (\frac{1}{2}, H )$. Then there exists a pathwise solution $x \in C^{\gamma} ([0,T])$ to the equation 
\begin{eqnarray}\label{eq18}
x_t = x_0 + \int_0^t \sigma (x_s) d B_s^H , \quad t \in [0,T].  
\end{eqnarray}
\end{theorem}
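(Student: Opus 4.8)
The plan is to construct the solution as the limit of the solutions of the regularised equations in which $\sigma$ is replaced by the Lipschitz coefficients $\sigma_n$, exploiting the Lamperti-type representation furnished by the primitives $\Lambda_n$ and $\Lambda$. Following the case analysis of Subsection 3.1 it suffices to treat equation (\ref{eq8}), and I would take as candidate solution $x_t=\Lambda^{-1}(B^H_t-B^H_a+\Lambda(0))$ (for a general initial condition the analogue is $x_t=\Lambda^{-1}(B^H_t-B^H_0+\Lambda(x_0))$, which recovers the formulae of Subsection 3.1). Since $\Lambda^{-1}$ is Lipschitz by Remark \ref{lipschitz} and $B^H\in C^{\gamma}([0,T])$ a.s.\ for every $\gamma<H$, the candidate immediately belongs to $C^{\gamma}([0,T])$, so the regularity assertion of the theorem is automatic; the whole content is to show that $x$ satisfies the integral equation.

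The first step is to verify that the smooth approximants $x^n_t:=\Lambda_n^{-1}(B^H_t-B^H_a+\Lambda_n(0))$ solve the regularised equation $x^n_t=\int_a^t\sigma_n(x^n_s)\,dB^H_s$. Because $\sigma_n>0$ is Lipschitz, $\Lambda_n\in C^1(\mathbb{R})$ with $\Lambda_n'=1/\sigma_n$, hence $\Lambda_n^{-1}\in C^1$ with $(\Lambda_n^{-1})'=\sigma_n\circ\Lambda_n^{-1}$, which is Lipschitz. As $\gamma>1/2$, the standard change-of-variables formula for the Young integral (valid since $\Lambda_n^{-1}\in C^1$ and $2\gamma>1$) applies to $\Lambda_n^{-1}$ along the path $s\mapsto B^H_s-B^H_a+\Lambda_n(0)$ and yields $x^n_t=\int_a^t(\Lambda_n^{-1})'(B^H_s-B^H_a+\Lambda_n(0))\,dB^H_s=\int_a^t\sigma_n(x^n_s)\,dB^H_s$, using $\Lambda_n^{-1}(\Lambda_n(0))=0$.

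Next I would pass to the limit. Lemma \ref{lemma1} gives $\|\Lambda_n^{-1}-\Lambda^{-1}\|_\infty\le C_\alpha/n$ and $\Lambda_n(0)\to\Lambda(0)$; combined with the uniform Lipschitz bound of Remark \ref{lipschitz} this shows $\sup_{t\in[a,T]}|x^n_t-x_t|\to0$ a.s., and a refinement of the same estimates gives $\sup_n\|x^n\|_{\gamma,[a,T]}<\infty$. It then remains only to identify $\lim_n\int_a^t\sigma_n(x^n_s)\,dB^H_s$ with $\int_a^t\sigma(x_s)\,dB^H_s$.

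This last identification is where I expect the real difficulty to lie, because $\sigma$ is discontinuous: the integrand $s\mapsto\sigma(x_s)$ is merely bounded, jumping exactly at the zeros of $B^H_\cdot-B^H_a$, and is \emph{not} H\"older, so the limiting Young integral is not covered by the standard criterion. To control it I would use the Z\"ahle representation (\ref{eq:def-intg-frac}), writing each integral as $\int_a^t(D^{\tilde{\alpha}}_{a+}f)(s)\,(D^{1-\tilde{\alpha}}_{t-}(B^H)^{t-})(s)\,ds$ with $\tilde{\alpha}\in(1-\gamma,1)$ chosen as in Lemma \ref{lemma4}, and bounding the $B^H$-factor by $C_{\tilde{\alpha}}\|B^H\|_{\gamma,[0,T]}(t-s)^{\tilde{\alpha}+\gamma-1}$. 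The key observation that tames the fractional derivative of the discontinuous integrand is that, since $\sigma$ is locally constant away from $0$ and $x$ is continuous, for any $s$ with $x_s\neq 0$ one has $\sigma(x_r)=\sigma(x_s)$ for all $r$ in a neighbourhood of $s$, so the increments $\sigma(x_s)-\sigma(x_r)$ entering $D^{\tilde{\alpha}}_{a+}(\sigma(x_\cdot))(s)$ vanish near $r=s$ and the remaining contribution is controlled by the distance from $s$ to the zero set of $B^H_\cdot-B^H_a$. Establishing that this produces an integrable majorant (uniformly in $n$) and that $D^{\tilde{\alpha}}_{a+}(\sigma_n(x^n_\cdot))\to D^{\tilde{\alpha}}_{a+}(\sigma(x_\cdot))$ for a.e.\ $s$ — i.e.\ controlling the vanishing contribution of the sets $\{x^n_s\in(-1/n,0)\}$ — is the delicate point, and it is here that the explicit expressions for $\Lambda^{-1}_n$ and $\Lambda^{-1}$ from Lemma \ref{lemma1} would be used; dominated convergence then closes the argument and shows $x_t=\int_a^t\sigma(x_s)\,dB^H_s$.
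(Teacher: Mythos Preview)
Your overall plan is sound and your candidate solution $x_t=\Lambda^{-1}(B^H_t-B^H_a+\Lambda(0))$ is exactly the one the paper uses, but the route you take to verify the integral equation is genuinely different from the paper's. The paper does \emph{not} pass to the limit through the regularisations $\sigma_n$ for existence. Instead it observes from the explicit formula (\ref{calL-1}) that, for $\alpha<1/2$, $\Lambda^{-1}$ is a convex function, and then invokes directly a change-of-variables formula for convex functions of fractional Brownian motion (Remark~3.5 in \cite{AMV}, Theorem~2.1 in \cite{MSV}):
\[
\Lambda^{-1}(B^H_t-B^H_a+\bar z)-\Lambda^{-1}(\bar z)=\int_a^t(\Lambda^{-1})'_+(B^H_s-B^H_a+\bar z)\,dB^H_s,
\]
after which the identification $(\Lambda^{-1})'_+(y)=\sigma(y-\bar z)=\sigma(\Lambda^{-1}(y))$ finishes the proof in one line. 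So the ``delicate point'' you correctly isolate---making sense of the Young integral of the discontinuous integrand and showing it is the limit of the smooth ones---is entirely outsourced to the cited references, which prove exactly this for right derivatives of convex (hence BV) functions.

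Your approximation approach is not wrong, and in fact the paper does use the $\sigma_n$ machinery later, for uniqueness (Lemmas~\ref{le:lnYi} and the lemma following it). But carrying it through for existence would force you to establish the integrability of $s\mapsto D^{\tilde\alpha}_{a+}(\sigma(x_\cdot))(s)$, which amounts to controlling $\int_0^T d(s)^{-\tilde\alpha}\,ds$ for $d(s)$ the distance from $s$ to the zero set of $B^H_\cdot-B^H_a$, with $\tilde\alpha>1-H$; this is precisely the nontrivial content of \cite{AMV,MSV}. The paper's route therefore buys a much shorter proof by recognising the convexity structure; your route is more self-contained in spirit but, as you yourself anticipate, would end up re-deriving the cited result.
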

\begin{proof} By (\ref{eq2}) and (\ref{eq3}), we only need to show that the equation
$$x_t=\int_a^t \sigma(x_s)dB^H_s,\quad t\ge a,$$
has a solution for every $a\ge 0$. Towards  this end, we
observe that, for $\alpha<1/2$, (\ref{calL-1}) implies that $\Lambda^{-1}$ is a convex
function. Then, Remark 3.5 in \cite{AMV} (see also Theorem 2.1 in \cite{MSV})
yields
$$\Lambda^{-1}(B_t^H-B_a^H+\bar{z})-\Lambda^{-1}(\bar{z})=
\int_a^t(\Lambda^{-1})^{'}_+(B_s^H-B_a^H+\bar{z})dB^H_s,\quad t\ge a.$$
Since $(\Lambda^{-1})^{'}_+(x)=\sigma(x-\bar{z})$, setting 
$x_t=\Lambda^{-1}(B_t^H-B_a^H+\bar{z})$, then we have
$$x_t=\int_a^t\sigma(B_s^H-B_a^H)dB^H_s.$$
Finally, we also have that $\sigma(x)=\sigma(\Lambda^{-1}(x
+\bar{z}))$. Thus the proof is complete.

\end{proof}

Now we want to see that ($\ref{eq18}$) has a unique solution $x \in C^\gamma ([0,T])$, for $\gamma \in (\frac{1}{2} , H)$. Towards this end, in the following result we will use the approximation $\sigma_n$, because we will take advantage of  \cite{zahle} (Theorem 4.2.1), 
which requires the Lipschitz property of $\sigma_n$.

Note that we have figure out a solution of ($\ref{eq18}$) such that 
\begin{eqnarray}\label{eq20}
1_{\{a \leq r < s < T\}} \frac{\sigma (x_s) - \sigma (x_r)}{(s-r)^{1+\tilde{\alpha}}} \in L^1([0,T]^2),
\end{eqnarray}
with probability 1.

\begin{lemma}\label{le:lnYi}
Let $\gamma \in (\frac{1}{2} , H )$ and $x$ a $\gamma$-H\"older continuous solution to ($\ref{eq18}$) such that ($\ref{eq20}$) holds. Then, 
\[\Lambda_n (x_t) = \Lambda_n (x_0) + \int_0^t \frac{\sigma (x_s)}{\sigma_n (x_s)} d B_s^H, \quad t \in [0,T].\]
\end{lemma}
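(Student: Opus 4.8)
The plan is to read the identity as the composition of two facts: a change-of-variables (Itô) formula for the Young integral applied to the $C^1$ map $\Lambda_n$, and the substitution rule that lets one pass from integration against $x$ to integration against $B^H$. I would begin by recording the regularity of the outer map. Since $\sigma_n$ is Lipschitz by (\ref{eq4}) and bounded below by $\frac{1}{1-\alpha}>0$ (because $\alpha<1/2$), its reciprocal $\Lambda_n'=\frac{1}{\sigma_n}$ is bounded and Lipschitz on $\mathbb{R}$, so $\Lambda_n\in C^1(\mathbb{R})$ with Lipschitz derivative. As $x\in C^{\gamma}([0,T])$ with $\gamma>1/2$, the composition $s\mapsto\Lambda_n'(x_s)=\frac{1}{\sigma_n(x_s)}$ is again $\gamma$-H\"older, whence $\gamma+\gamma>1$. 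These are precisely the hypotheses under which Theorem 4.2.1 of \cite{zahle} applies to the pair $(\Lambda_n,x)$ — and this is exactly where the Lipschitz property of $\sigma_n$ is used — giving
\[
\Lambda_n(x_t)=\Lambda_n(x_0)+\int_0^t \frac{1}{\sigma_n(x_s)}\,dx_s,\qquad t\in[0,T],
\]
where the right-hand side is the Young integral of $\frac{1}{\sigma_n(x_\cdot)}$ with respect to $x$.

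Next I would substitute $dx_s=\sigma(x_s)\,dB_s^H$. Since $x$ solves (\ref{eq18}), i.e. $x_t=x_0+\int_0^t\sigma(x_s)\,dB^H_s$, the associativity (substitution) property of the Young integral yields
\[
\int_0^t \frac{1}{\sigma_n(x_s)}\,dx_s=\int_0^t \frac{\sigma(x_s)}{\sigma_n(x_s)}\,dB_s^H,
\]
and combining the two displays is the assertion. To justify this I must verify that the \emph{discontinuous} integrand $h_s:=\frac{\sigma(x_s)}{\sigma_n(x_s)}$ is integrable with respect to $B^H$ in the sense of (\ref{eq:def-intg-frac}); this is the one place where the continuity of the weight is lost, and it is exactly what hypothesis (\ref{eq20}) is meant to repair. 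With $\tilde{\alpha}$ as in (\ref{eq20}) (which, since $\gamma>1/2$ and $\gamma<H$, we may take in $(1-\gamma,\gamma)$, so that also $\tilde{\alpha}>1-H$), I would control $D_{0+}^{\tilde{\alpha}}h$ through (\ref{eq:def-Da-plus}): the term $\frac{h_t}{t^{\tilde{\alpha}}}$ is integrable because $h$ is bounded and $\tilde{\alpha}<1$, while for the singular part I would split $h_t-h_r=\frac{1}{\sigma_n(x_t)}\big(\sigma(x_t)-\sigma(x_r)\big)+\sigma(x_r)\big(\frac{1}{\sigma_n(x_t)}-\frac{1}{\sigma_n(x_r)}\big)$. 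The H\"older term contributes $(t-r)^{\gamma-1-\tilde{\alpha}}$, integrable since $\tilde{\alpha}<\gamma$, and the term carrying the jump of $\sigma$ is controlled in $L^1([0,T]^2)$ exactly by (\ref{eq20}). Hence $D_{0+}^{\tilde{\alpha}}h\in L^1([0,T])$, and pairing this with the estimate on $D_{t-}^{1-\tilde{\alpha}}(B^H)^{t-}$ from Lemma \ref{lemma4} shows $(D_{0+}^{\tilde{\alpha}}h)\,D_{t-}^{1-\tilde{\alpha}}(B^H)^{t-}\in L^1$, so that $\int_0^t h_s\,dB_s^H$ is well-defined.

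The main obstacle is precisely this last step: neither the substitution rule nor the very meaning of the right-hand integral is automatic, because $\sigma$, and therefore $h$, is discontinuous, so the usual Riemann-sum identification of the Young integral is unavailable. The resolution is to argue throughout with the fractional-derivative definition (\ref{eq:def-intg-frac}) rather than with Riemann sums, and to use (\ref{eq20}) to produce the $L^1$ integrability of $D_{0+}^{\tilde{\alpha}}h$. Everything else — the chain rule for $\Lambda_n$ and the decay estimate for the fractional derivative of $B^H$ — is already available through the Lipschitz property of $\sigma_n$ and Lemma \ref{lemma4}.
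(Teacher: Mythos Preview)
Your overall plan --- chain rule via Z\"ahle's Theorem 4.2.1, then substitution $dx_s=\sigma(x_s)\,dB_s^H$ --- matches the paper's. The gap is in the substitution step: you invoke ``the associativity (substitution) property of the Young integral'' as if it were a black box, but the standard associativity results (in Z\"ahle or in classical Young theory) require the inner integrand $\sigma(x_\cdot)$ to be H\"older continuous, which it is not. Verifying that $\int_0^t h_s\,dB_s^H$ is well-defined in the fractional-derivative sense, which you do correctly, shows that the right-hand side makes sense but does \emph{not} by itself prove the equality $\int_0^t \frac{1}{\sigma_n(x_s)}\,dx_s=\int_0^t h_s\,dB_s^H$. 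Your final paragraph recognizes the obstacle, but the proposed resolution (``argue throughout with the fractional-derivative definition'') remains a slogan rather than an argument: the fractional derivative of a product does not factor, so there is no direct route from $D_{0+}^{\tilde\alpha}h$ back to $D_{0+}^{\tilde\alpha}\big(\frac{1}{\sigma_n(x_\cdot)}\big)$ against $x$.

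The paper closes this gap by inserting a Riemann-sum bridge. Since $\frac{1}{\sigma_n(x_\cdot)}$ \emph{is} $\gamma$-H\"older, $\int_0^t \frac{1}{\sigma_n(x_s)}\,dx_s=\lim_{|\pi|\to 0}\sum \frac{x_{s_{i+1}}-x_{s_i}}{\sigma_n(x_{s_i})}$; each increment $x_{s_{i+1}}-x_{s_i}$ is, by the equation for $x$, exactly $\int_{s_i}^{s_{i+1}}\sigma(x_r)\,dB_r^H$, so the Riemann sum becomes $\int_0^t \sigma(x_r)\,\sigma_n^\pi(r)\,dB_r^H$ with $\sigma_n^\pi$ a step function. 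The paper then bounds $\int_0^t\big(h_s-\sigma(x_s)\sigma_n^\pi(s)\big)\,dB_s^H$ via the fractional-derivative definition, decomposing $D_{0+}^{\tilde\alpha}$ of the product into a term $\sigma(x_s)\,D_{0+}^{\tilde\alpha}\!\big(\frac{1}{\sigma_n(x_\cdot)}-\sigma_n^\pi\big)$ (handled by Z\"ahle's Theorems 4.1.1 and 4.3.1) plus a cross term carrying $\frac{\sigma(x_s)-\sigma(x_r)}{(s-r)^{1+\tilde\alpha}}$, which is where (\ref{eq20}) enters --- as a dominating function for the convergence $\sigma_n^\pi\to\frac{1}{\sigma_n(x_\cdot)}$, not merely for well-definedness. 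This step-function approximation is the missing idea in your proposal.
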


\begin{proof}
By \cite{zahle} (Theorem 4.2.1) and (\ref{eq4}), we obtain
 \[\int_0^t \frac{d x_s}{\sigma_n (x_s)} = \lim\limits_{|\pi| \to 0} \sum_{j=0}^{m-1} \frac{x_{s_{i+1}} - x_{s_i}}{\sigma_n (x_{s_i})},\]
where $\pi$ is a partition of [0,t] of the form $\pi = \{0 = s_0 < s_1<\cdots< s_m = t\}.$
Therefore, with the convention $\sigma_n^{\pi} (s) = \sum_{j=1}^{m-1} \frac{1_{[s_i,s_{i+1}]}(s)}{\sigma_n (x_{s_i})}$, we get
\begin{eqnarray}\label{eq21}
 \int_0^t \frac{d x_s}{\sigma_n (x_s)} &=& \lim\limits_{|\pi| \to 0} \sum_{j=0}^{m-1} \int_{s_i}^{s_{i+1}} \frac{\sigma(x_r)}{\sigma_n (x_{s_i})} d B_r^H \notag\\
&=& \lim\limits_{|\pi| \to 0} \int^t_0 \sigma(x_r) \sigma_n^{\pi} (r) dB_r^H.
\end{eqnarray}

On the other hand, the definition of Young integral (\ref{eq:def-intg-frac}) allows to establish
\begin{eqnarray*}\lefteqn{
\left| \int^t_0 \left ( \frac{\sigma(x_s)}{\sigma_n (x_{s})} - \sigma_n^{\pi}(s) \sigma (x_{s}) \right ) dB^H_s \right |} \\
&\le& C_{\tilde{\alpha}} \left | \int^t_0 \sigma (x_{s})( D_{0+}^{\tilde{\alpha}}( \frac{1}{\sigma_n(x_s)} - \sigma_n^{\pi} (s) ) )( D_{t-}^{1 - \tilde{\alpha}} B^H)_{s}^{t-}  ds \right|  \\
&&+ \tilde{\alpha} \left | \int^t_0 \int^s_0 \left( \frac{1}{\sigma_n(x_r)} - \sigma_n^{\pi} (r) \right ) \frac{\sigma(x_s) - \sigma (x_r)}{(s-r)^{1+
\tilde{\alpha}}} dr ( D_{t-}^{1 - \tilde{\alpha}} B^H)_{s}^{t-} ds \right|\\
&\le& C_{\tilde{\alpha},T} ||B^H||_{\gamma, [0,T] } \bigg ( \int^T_0 \left | D_{0+}^{ \tilde{\alpha}} \left (  \frac{1}{\sigma_n(x_s)} - \sigma_n^{\pi}(s) \right) \right| ds \big. \\
&&+  \int^T_0 \int^s_0 \left | \frac{1}{\sigma_n(x_r)} - \sigma_n^{\pi}(r) \right | \frac{  | \sigma(x_s) - \sigma(x_r)  |}{(s-r)^{1+\tilde{\alpha}}} dr ds \bigg ),
\end{eqnarray*}
where last inequality follows from Lemma $\ref{lemma4}$. Then, the result is a consequence of Z\"ahle \cite{zahle} (Theorems 4.1.1 and 4.3.1).
\end{proof}

Note that the solution $x$ to equation ($\ref{eq18}$) is such that, there exists a random variable $G$ such that
\begin{eqnarray*}
 | x_s - x_t  | \le G |s-t|^\gamma.
\end{eqnarray*}
In the following result we set
\begin{eqnarray*}
f_{\gamma }(s) = \mathbb{E}( G^{\frac{\tilde{\alpha} + \varepsilon}{\gamma}} |x_s|^{-\frac{( \tilde{\alpha} + \varepsilon)}{\gamma}}).
\end{eqnarray*}

\begin{lemma}
Let $\gamma \in (\frac{1}{2}, H)$, $1-H<1-\gamma<\tilde{\alpha}<\gamma$, $x$ a $\gamma$-H\"older continuous solution of (\ref{eq18}) such that (\ref{eq20}) 
and $\int_0^T1_{\{x_s=0\}}ds=0$
hold, $f_{\gamma} \in L^1 \left( [0,T]\right)$ for some $\varepsilon>0$ small enough,
 $G^{1-\eta} \in L^q$ and $\mathbb{P} \left( a < x_s \le b \right) \le g(s) (b - a)$, where $g \in L^{\frac{1}{p}} \left( [0,T] \right)$ with $\frac{1}{p} + \frac{1}{q} = 1$ and
 $p\in(1,\frac{1}{1-\eta})$ for some $\eta<1-\frac{{\tilde{\alpha}}}{\gamma}$.
 Then, 
\begin{eqnarray*}
\Lambda (x_t) - \Lambda (x_0) = B_t^H, \ t\in [0,T].
\end{eqnarray*}
\end{lemma}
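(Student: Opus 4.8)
The plan is to pass to the limit $n\to\infty$ in the identity of Lemma~\ref{le:lnYi},
\[
\Lambda_n(x_t)-\Lambda_n(x_0)=\int_0^t\frac{\sigma(x_s)}{\sigma_n(x_s)}\,dB_s^H .
\]
On the left, Lemma~\ref{lemma1} gives $\Lambda_n(x_t)\to\Lambda(x_t)$ and $\Lambda_n(x_0)\to\Lambda(x_0)$ surely, because $|\Lambda_n-\Lambda|\le C_\alpha/n$. On the right, replacing $\sigma_n$ by $\sigma$ would turn the integrand into $\sigma/\sigma\equiv1$, whose Young integral against $B^H$ is $B_t^H$. Hence the statement reduces to showing that the error term $\int_0^t h_n(s)\,dB_s^H$ vanishes as $n\to\infty$, where $h_n(s):=\frac{\sigma(x_s)}{\sigma_n(x_s)}-1$. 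By the definitions of $\sigma$ and $\sigma_n$, one has $h_n(s)=0$ unless $x_s\in(-1/n,0)$, and $|h_n|\le\frac{1-2\alpha}{1-\alpha}$; since $\int_0^T 1_{\{x_s=0\}}\,ds=0$, this forces $h_n\to0$ for almost every $s$, and the whole difficulty is to promote this to convergence of the stochastic integrals.

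Next I would write the error integral through the fractional representation (\ref{eq:def-intg-frac}),
\[
\int_0^t h_n(s)\,dB_s^H=\int_0^t (D_{0+}^{\tilde\alpha}h_n)(s)\,\big(D_{t-}^{1-\tilde\alpha}(B^H)^{t-}\big)(s)\,ds ,
\]
and estimate the second factor by Lemma~\ref{lemma4}, obtaining
\[
\Big|\int_0^t h_n(s)\,dB_s^H\Big|\le C_{\tilde\alpha}\,\|B^H\|_{\gamma,[0,T]}\int_0^t\big|(D_{0+}^{\tilde\alpha}h_n)(s)\big|\,(t-s)^{\tilde\alpha+\gamma-1}\,ds .
\]
Using (\ref{def:der+}) and (\ref{eq:def-Da-plus}) I would split $D_{0+}^{\tilde\alpha}h_n(s)$ into the boundary term $h_n(s)/s^{\tilde\alpha}$ and the difference term $\tilde\alpha\int_0^s\frac{h_n(s)-h_n(r)}{(s-r)^{1+\tilde\alpha}}\,dr$. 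The boundary term is controlled immediately from the boundedness and the support of $h_n$. For the difference term the kernel $(s-r)^{-1-\tilde\alpha}$ is not integrable, so I would use the $\gamma$-H\"older continuity $|x_s-x_r|\le G|s-r|^\gamma$ to extract a factor $|s-r|^\gamma$ (it is here that $\tilde\alpha<\gamma$ is used), while keeping the bound $|h_n|\le C$ and the indicator $1_{\{x_\cdot\in(-1/n,0)\}}$ that localizes the integrand near the origin.

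The decisive step is to take expectations and close the estimate by H\"older's inequality, and this is exactly where the hypotheses enter. I would control the expected measure of the near-zero set through $\mathbb{P}(a<x_s\le b)\le g(s)(b-a)$, the size of the H\"older constant through $G^{1-\eta}\in L^q$, and the singular contribution coming from the times at which $x$ approaches $0$ through the integrability of $f_\gamma(s)=\mathbb{E}(G^{(\tilde\alpha+\varepsilon)/\gamma}|x_s|^{-(\tilde\alpha+\varepsilon)/\gamma})$. With $\frac1p+\frac1q=1$, $p\in(1,\frac1{1-\eta})$ and $\eta<1-\frac{\tilde\alpha}{\gamma}$, the exponents are arranged so that the three factors $g\in L^{1/p}$, $G^{1-\eta}\in L^q$ and $f_\gamma\in L^1$ combine into a finite bound carrying a strictly negative power of $n$ (the $O(n)$ Lipschitz constant of $\sigma_n$ being beaten by the $O(1/n)$ time-measure of $\{x_s\in(-1/n,0)\}$). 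Consequently $\mathbb{E}\big|\int_0^t h_n(s)\,dB_s^H\big|\to0$. Passing to an almost surely convergent subsequence, recalling that the deterministic terms converge surely, and using the continuity in $t$ of both sides on a countable dense set, one obtains $\Lambda(x_t)-\Lambda(x_0)=B_t^H$ for every $t\in[0,T]$, almost surely.

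The main obstacle is the difference term together with the exponent bookkeeping. Since $\sigma$ is genuinely discontinuous at $0$, $h_n$ is not H\"older continuous near the crossing times of $x$, so $|h_n(s)-h_n(r)|$ cannot simply be dominated by a multiple of $|s-r|^\gamma$; instead one must interpolate between the uniform bound on $h_n$ and its $O(n)$ Lipschitz bound and then absorb the resulting negative power of $|x_s|$ using $f_\gamma$. Making this interpolation quantitative, checking that every resulting integral is finite under the stated hypotheses, and verifying that the net power of $n$ is negative, is where essentially all the work of the proof is concentrated.
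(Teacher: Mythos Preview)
Your plan is essentially the paper's proof: pass to the limit in Lemma~\ref{le:lnYi} using Lemma~\ref{lemma1} on the left, write the right-hand error as a Young integral via (\ref{eq:def-intg-frac}), bound the $D_{t-}^{1-\tilde\alpha}$ factor by Lemma~\ref{lemma4}, split $D_{0+}^{\tilde\alpha}h_n$ into a boundary term (handled by dominated convergence) and a difference term, and control the latter through the hypotheses before extracting an a.s.\ convergent subsequence.

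One point where your description is slightly off, and which you should be aware of when you flesh the argument out: the three hypotheses do \emph{not} combine in a single H\"older inequality giving a power of $n$. The paper (and any version of this argument) splits the difference term according to the positions of $x_s,x_r$ relative to $0$ and $-1/n$. On the sets where a genuine sign change occurs, say $\{x_s<0<x_r\}$, one uses $|x_s|\le|x_s-x_r|\le G|s-r|^\gamma$ to replace $(s-r)^{-1-\tilde\alpha}$ by $C|s-r|^{-1+\varepsilon}G^{(\tilde\alpha+\varepsilon)/\gamma}|x_s|^{-(\tilde\alpha+\varepsilon)/\gamma}$; this is an $n$-\emph{independent} integrable dominant (that is exactly what $f_\gamma\in L^1$ provides), and the conclusion comes from dominated convergence together with the pointwise limit $h_n(s)-h_n(r)\to 0$. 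On the sets where both $x_s,x_r$ lie in or adjacent to $(-1/n,0)$, one instead interpolates $|\sigma_n(x_s)-\sigma_n(x_r)|\le C n^{1-\eta}G^{1-\eta}|s-r|^{\gamma(1-\eta)}$ and pairs $G^{1-\eta}\in L^q$ with $\mathbb P(x_s\in(-1/n,0))\le g(s)/n$ via H\"older, producing the factor $n^{1-\eta-1/p}\to 0$ (this is where $p<1/(1-\eta)$ and $\eta<1-\tilde\alpha/\gamma$ enter). So $f_\gamma$ is a dominating function, while $g$ and $G^{1-\eta}$ are what actually drive the expectation to zero on the remaining region; keeping these two mechanisms separate is what makes the exponents close.
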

\begin{remark}
Note that the solution to (\ref{eq18}) given in Theorem \ref{theorem1} satisfies
the assumptions of this result.
\end{remark}
\begin{proof}
From Lemmas $\ref{lemma4}$, (\ref{lemma1}) and \ref{le:lnYi}, we only need to prove  that:
\begin{eqnarray}\label{eq22}
 \int^T_0 s^{-\tilde{\alpha}}  \left| \frac{\sigma(x_s)}{\sigma_n(x_s)} - 1 \right | ds \longrightarrow 0, \quad \text{as} \ n \rightarrow \infty,
\end{eqnarray}
and 
\begin{eqnarray}\label{eq23}
I_{n_k} =\int^T_0 \int^s_0 \left| \frac{\sigma(x_s)}{\sigma_{n_k}(x_s)} - \frac{\sigma(x_r)}{\sigma_{n_k}(x_r)} \right| (s-r)^{-(1+\tilde{\alpha})} dr ds \rightarrow 0, \quad \text{as} \  n_k \longrightarrow \infty,
\end{eqnarray}
for some subsequence$\{n_k:k\in\mathbb{N}\}$, w.p. 1. For ($\ref{eq22}$), we have: 
\begin{eqnarray*}
\int^T_0 s^{-\tilde{\alpha}} \left| \frac{ \sigma(x_s) - \sigma_n(x_s)}{\sigma_n(x_s)} \right| ds \leq C_\alpha \int^T_0 s^{-\tilde{\alpha}} | \sigma(x_s) - \sigma_n(x_s) | ds.
\end{eqnarray*}
Thus the dominated convergence theorem implies that ($\ref{eq22}$) holds.

For (\ref{eq23}), we have that
$$\left|\frac{\sigma(x_s)}{\sigma_n(x_s)} - \frac{\sigma(x_r)}{\sigma_n(x_r)} \right |
\rightarrow 0,\quad \hbox{\rm as }\ n\rightarrow\infty,\ \hbox{\rm w.p. 1},$$
and
\begin{eqnarray}\label{eq24}
\left| \frac{\sigma(x_s)}{\sigma_n(x_s)} - \frac{\sigma(x_r)}{\sigma_n(x_r)} \right | &=& \left| \frac{ \sigma(x_s) \sigma_n(x_r) - \sigma(x_r) \sigma_n(x_s)}{\sigma_n(x_s) \sigma_n(x_r)} \right| \nonumber \\
& \le & C_\alpha  | \sigma(x_s)\sigma_n(x_r) - \sigma(x_r) \sigma_n(x_s)  | \nonumber \\
& \le & C_\alpha  | \sigma(x_s) \big |  | \sigma_n(x_r) - \sigma_n(x_s)  | +  C_\alpha  | \sigma_n(x_s) \big |  | \sigma(x_s) - \sigma(x_r)  | \nonumber \\
& \le & C_\alpha \left( | \sigma_n(x_r) - \sigma_n(x_s) \big | + |  \sigma(x_s) -  \sigma(x_r)  | \right) \nonumber \\
& = & I_1 (s,r) + I_2(s,r).
\end{eqnarray}


Let us start analizing the term  $I_2(s,r)$. In this case we have the following:

\begin{eqnarray}
\label{I1}\lefteqn{
I_{\{0\le r\le s\le T\}} I_2(s,r) (s-r)^{-(1+\tilde{\alpha})} } \nonumber \\
& \le& C_\alpha I_{\{0\le r\le s\le T\}}
  |r-s|^{-1 - \tilde{\alpha}}   \left( 1_{\{x_s < 0 < x_r\} } +
   1_{\{x_r < 0 < x_s\} } \right).
\end{eqnarray}

Since over $ \{ x_s < 0 < x_r  \}$,     $|x_s|  < |x_r - x_s|$ and $|x_r - x_s| 
 \le G |r-s|^{\gamma}$. 
 Also we have  
$ |r - s| ^{-1} \le G^{\frac{1}{\gamma}} |x_s|^{-\frac{1}{\gamma}}$.
This inequality leads to:
\begin{equation}
\label{eq:inq1}
I_{\{0\le r\le s\le T\}} I_2(s,r) (s-r)^{-(1+\tilde{\alpha})} I_{\{x_s<0<x_r\}}
\le C_{\alpha}|r-s|^{-1+\varepsilon}G^{\frac{\tilde{\alpha}+\varepsilon}{\gamma}}
|x_s|^{-\frac{\tilde{\alpha}+\varepsilon}{\gamma}}.
\end{equation}
Similarly we can obtain,
\begin{equation}
\label{eq:inq2}
I_{\{0\le r\le s\le T\}} I_2(s,r) (s-r)^{-(1+\tilde{\alpha})} I_{\{x_r<0<x_s\}}
\le C_{\alpha}|r-s|^{-1+\varepsilon}G^{\frac{\tilde{\alpha}+\varepsilon}{\gamma}}
|x_r|^{-\frac{\tilde{\alpha}+\varepsilon}{\gamma}}.
\end{equation}

Now, we show that
\begin{equation}\label{eq:mG}
I_1(s,r)\le C_{\alpha} n^{1-\eta}G^{1-\eta}(s-r)^{\gamma(1-\eta)} 
\end{equation}
holds.  By 
inequality (\ref{eq4}), we can establish
\begin{eqnarray*}\lefteqn{
 \big| \sigma_n ( x_s) - \sigma_n(x_r) \big|} \\
&=& \big| \sigma_n ( x_s) - \sigma_n(x_r) \big|^{1-\eta}\big| \sigma_n ( x_s) - \sigma_n(x_r) \big|^\eta \\
&=& n^{1-\eta} \left |\frac{1-2\alpha}{(1-\alpha)\alpha} \right|^{1-\eta}  \left|x_s -
x_r
 \right |^{1-\eta}
\big| \sigma_n ( x_s) - \sigma_n(x_r) \big|^\eta \\
&\leq& C_{\alpha, \eta}  n^{1-\eta} \left |x_s-x_r\right|^{1-\eta},
\end{eqnarray*}
which allows us to see that (\ref{eq:mG}) is true.

On the other hand, 
\begin{eqnarray*}\lefteqn{
\mathbb{E}\left(I_{\{x_r<-\frac{1}{n}<x_s<0\}}G^{1-\eta}\right)}\\
&\le&\left(\mathbb{E}G^{q(1-\eta)}\right)^{1/q}\left(\mathbb{P}\left(x_s\in(-\frac{1}{n},0)
\right)\right)^{1/p}
\le Cn^{-\frac{1}{p}}g_s^{\frac{1}{p}}.
\end{eqnarray*}
Hence, (\ref{eq:mG}) implies
$$\mathbb{E}\left(\int_0^T\int_0^sI_1(s,r)(s-r)^{-(\tilde{\alpha}+1)}
I_{\{x_r<-\frac{1}{n}<x_s<0\}}drds\right)\rightarrow 0,$$
as $n\rightarrow\infty$.  Moreover, 
this analysis also implies
$$\mathbb{E}\left(\int_0^T\int_0^sI_1(s,r)(s-r)^{-(\tilde{\alpha}+1)}
\left(I_{\{x_s<-\frac{1}{n}<x_r<0\}}+
I_{\{-\frac{1}{n}<x_r,x_s<0\}}\right)drds\right)\rightarrow 0,$$
as $n\rightarrow\infty$. 
Finally, we can see that inequalities (\ref{eq:inq1}) and (\ref{eq:inq2}) are also true
when we write $I_1(s,r)$ instead of $I_2(s,r)$. Therefore, the result follows
from (\ref{eq24})-(\ref{eq:inq2}) and the dominated convergence theorem.
\end{proof}


\section{Numerical Results}

By (\ref{eq4}) and  the mean value theorem we have that $\sigma_n (\Lambda^{-1}_n(x))$ to be a Lipschitz function, then Z\"ahle (\cite{zahle}, theorem 4.3.1) implies:

\begin{eqnarray} \label{eq5}\lefteqn{
\Lambda_n^{-1}(B_t^H - B_a^H + z_n) - \Lambda_n^{-1} (z_n)}\notag \\
  &=& \Lambda_n^{-1} (B_t^H - B_a^H + z_n) \notag\\
&=& \int_a^t \sigma_n \left(\Lambda_n^{-1} (B_s^H - B_a^H + z_n) \right) d B_s^H,
\quad t\ge a,
\end{eqnarray} 
with $z_n= \Lambda_n (0)$. 

Therefore, $x_t^{(n)} = \Lambda_n^{-1} (B_t^H - B_a^H + z_n) $ is a solution to the equation  
\begin{eqnarray}\label{eq6}
x_t^{(n)} = \int_a^t \sigma_n (x_s^{(n)} )  d B_s^H  , \quad t \geq a.
\end{eqnarray} 

Consequently, by Lemma \ref{propaprox}, we can aproximate the solution of equation (\ref{DISC}) by the solution of equation  (\ref{eq6}).

In this section we show some simulations for the unique solution of equation (\ref{DISC}) for different values  of $H$ and $\alpha$. We use the approximation given in (\ref{eq6}).  We also present the limit case $H=0.5$, and $a=0$.

The following figures correspond to the   particular case the Skew Brownian motion (SBm) as the solution of a SDE with discontinuous coefficient in the  diffusion. Also, the case (c), $\alpha =0.5$  corresponds to the Brownian motion process. 

\begin{figure}[htbp]
\centering
\subfigure[SBm for $\alpha = 0.99$. ]{\includegraphics[width=45mm] {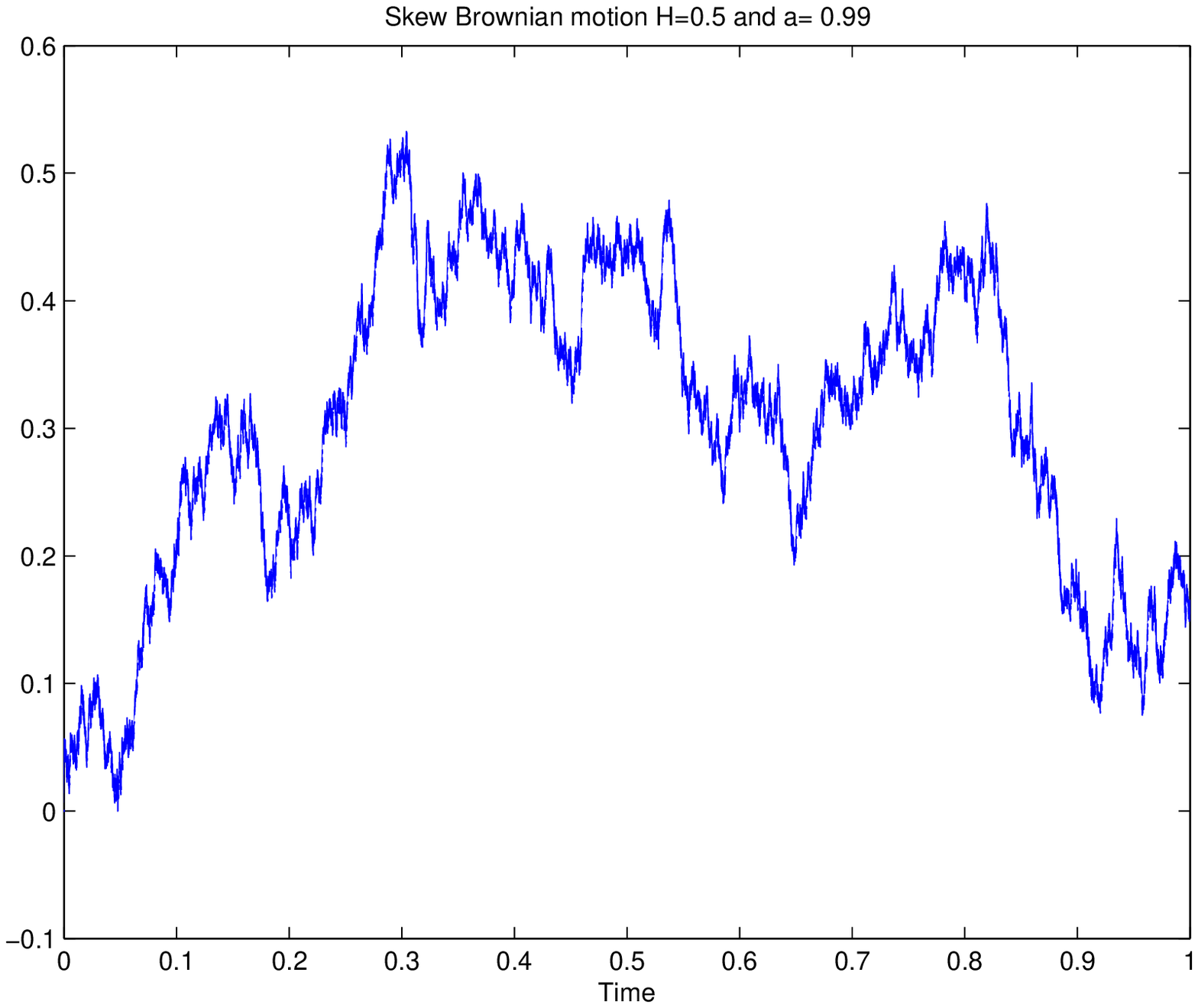}}
\subfigure[SBm for $\alpha = 0.01$.]{\includegraphics[width=45mm]{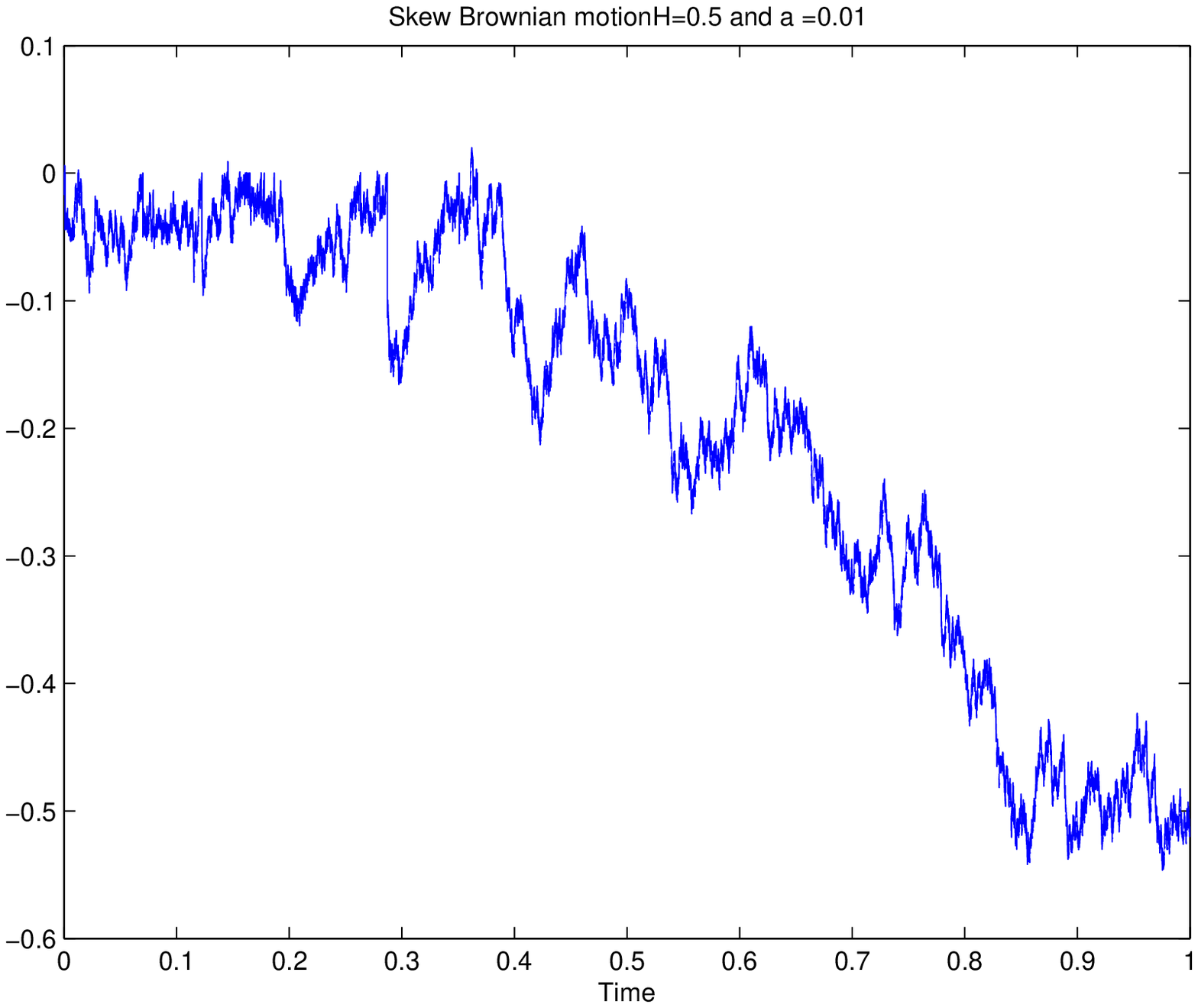}}
\subfigure[SBm for $\alpha = 0.5$.]{\includegraphics[width=45mm]{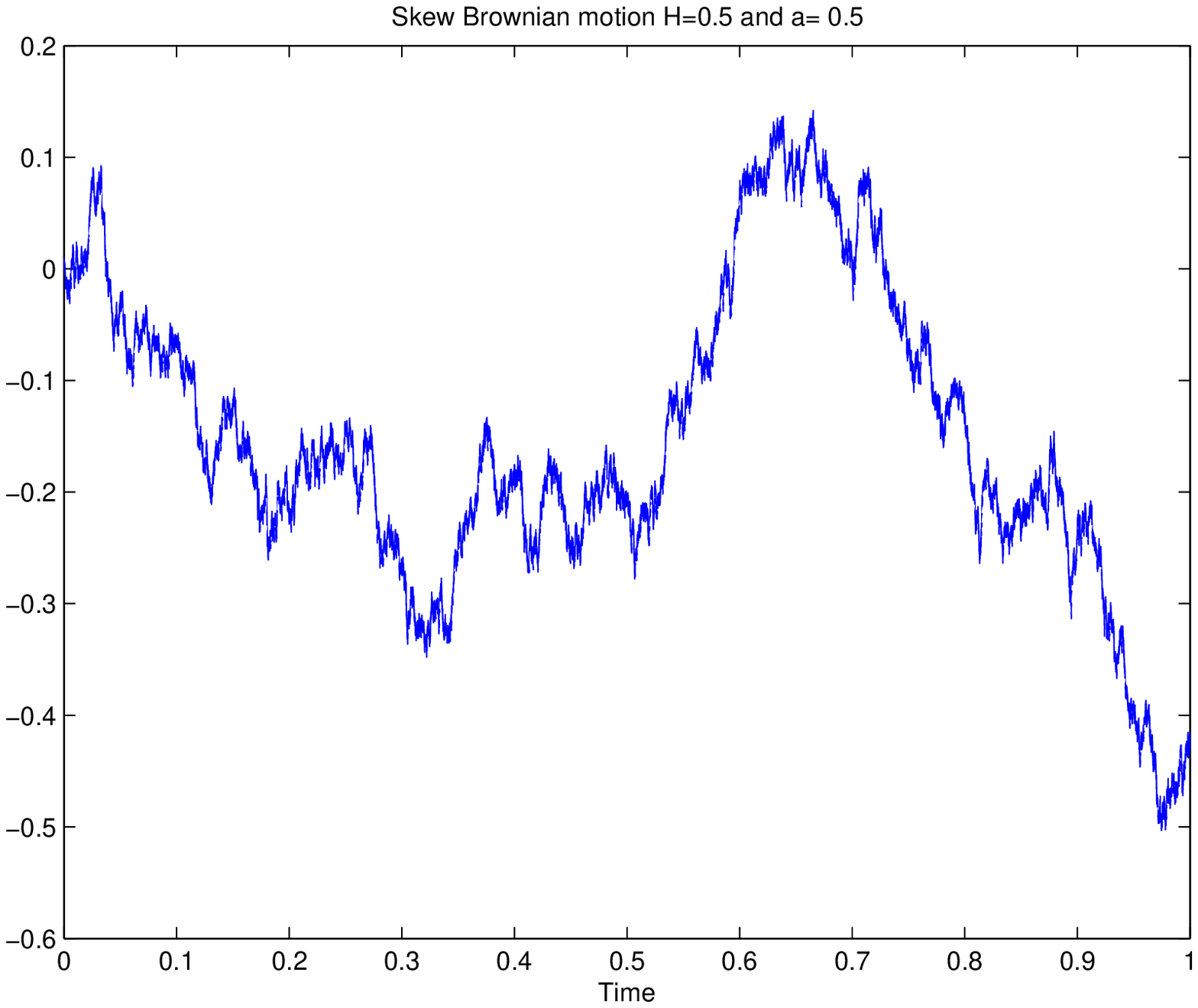}}
\label{SBm}
\end{figure}

The  following pictures  shows  samples for  the unique solution of (\ref{DISC}), for different values of $H$ and $\alpha$. 
\begin{figure}[htbp]
\centering
\subfigure[$x_t$ for  $H=0.75$ and  $\alpha = 0.1$]{\includegraphics[width=45mm] {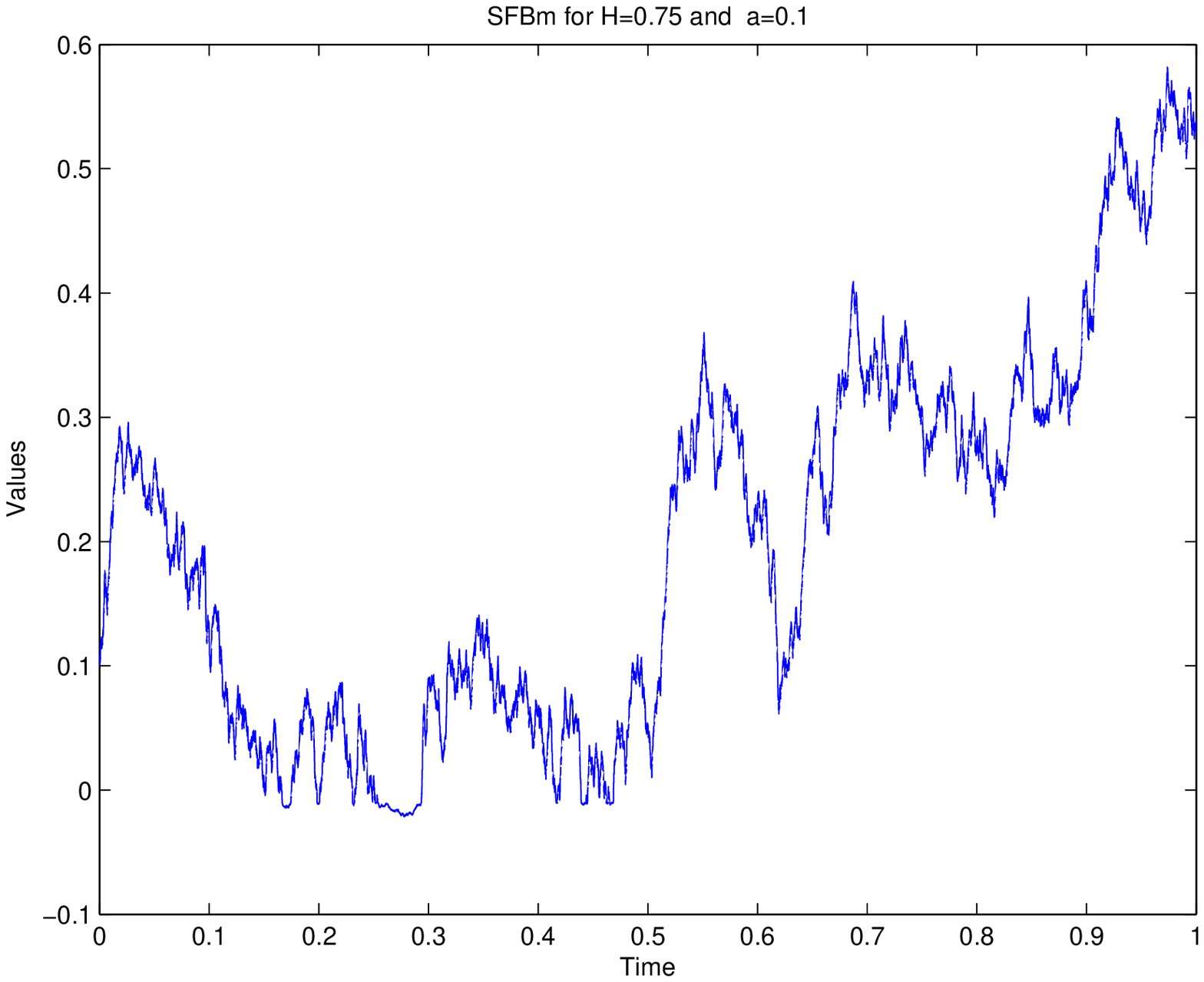}}
\subfigure[$x_t$ for $H=0.75$ and  $\alpha = 0.5$]{\includegraphics[width=45mm]{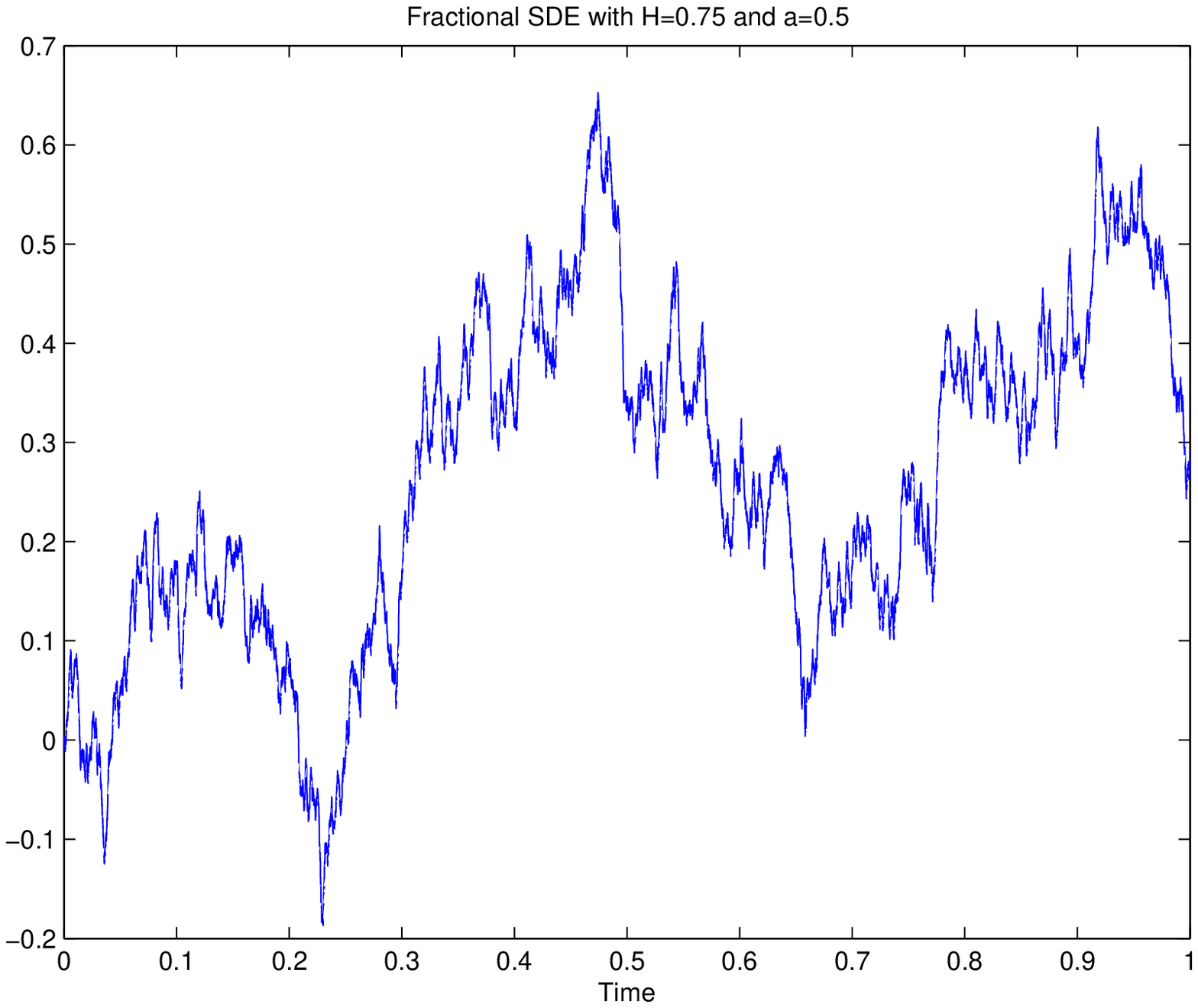}}
\subfigure[$x_t$ for $H=0.75$ and  $\alpha = 0.99$]{\includegraphics[width=45mm]{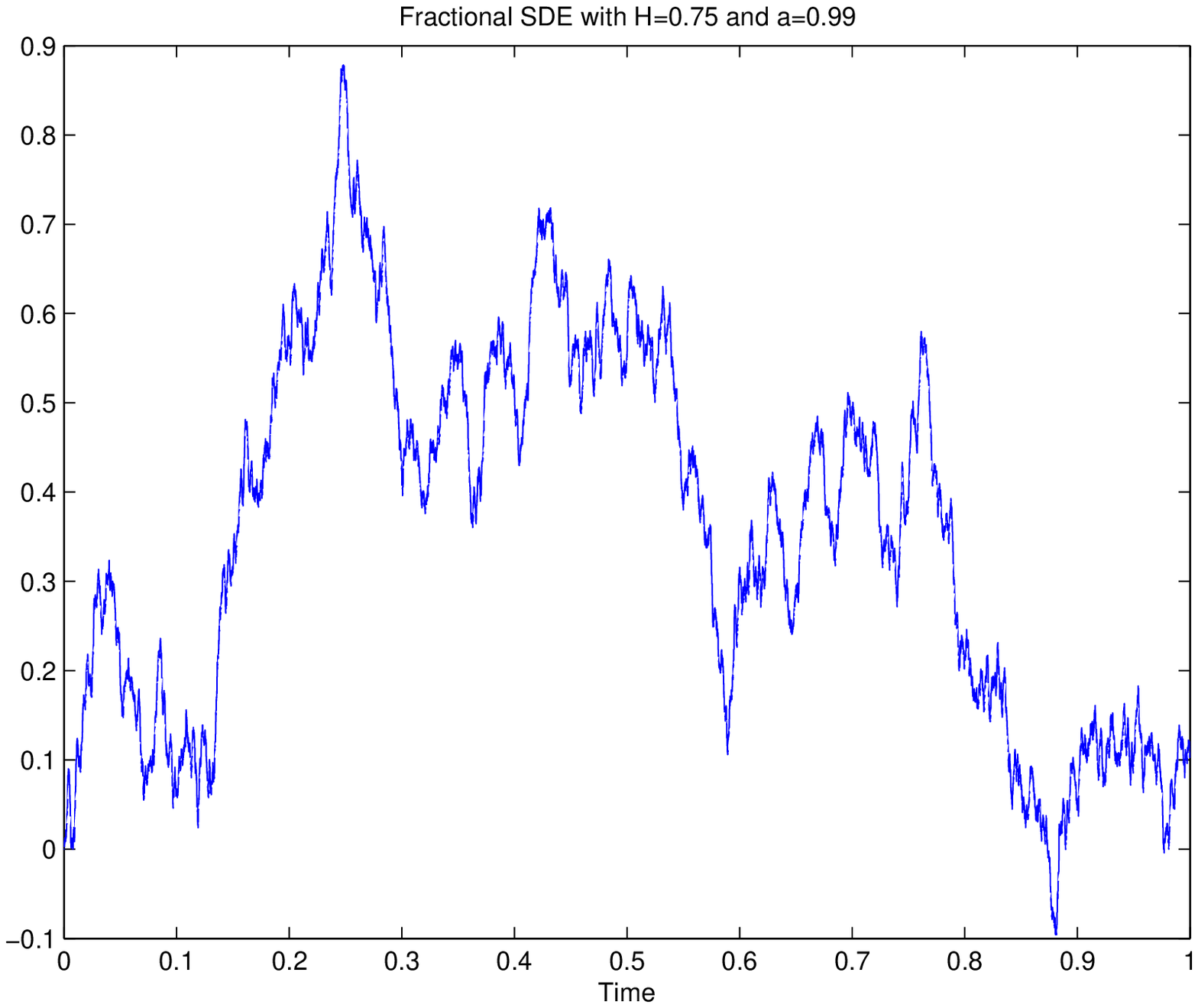}}
\label{}
\end{figure}

\begin{figure}[htbp]
\centering
\subfigure[$x_t$ for $H=0.95$ and  $\alpha = 0.1$]{\includegraphics[width=45mm] {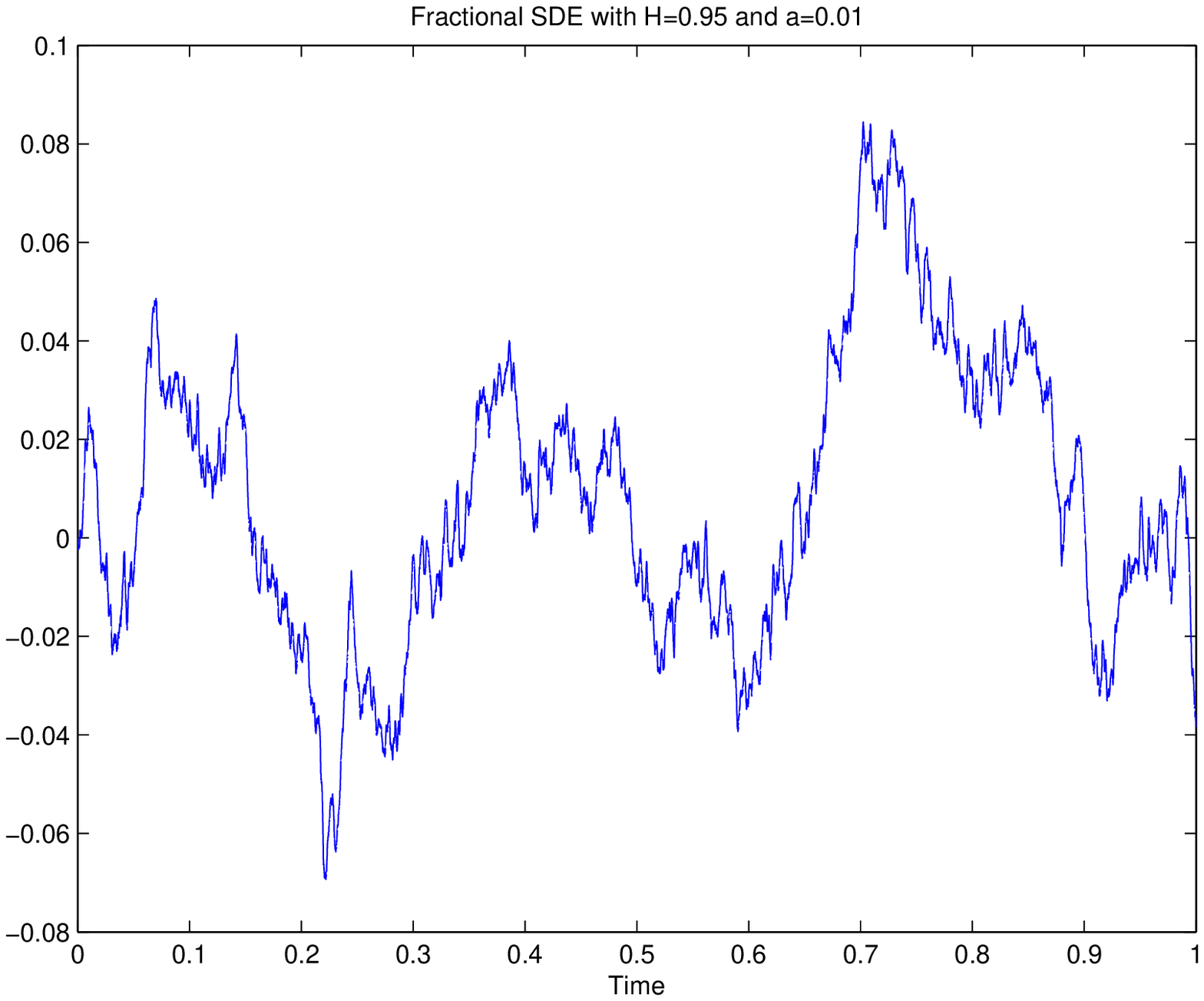}}
\subfigure[$x_t$ for $H=0.95$ and  $\alpha = 0.5$]{\includegraphics[width=45mm]{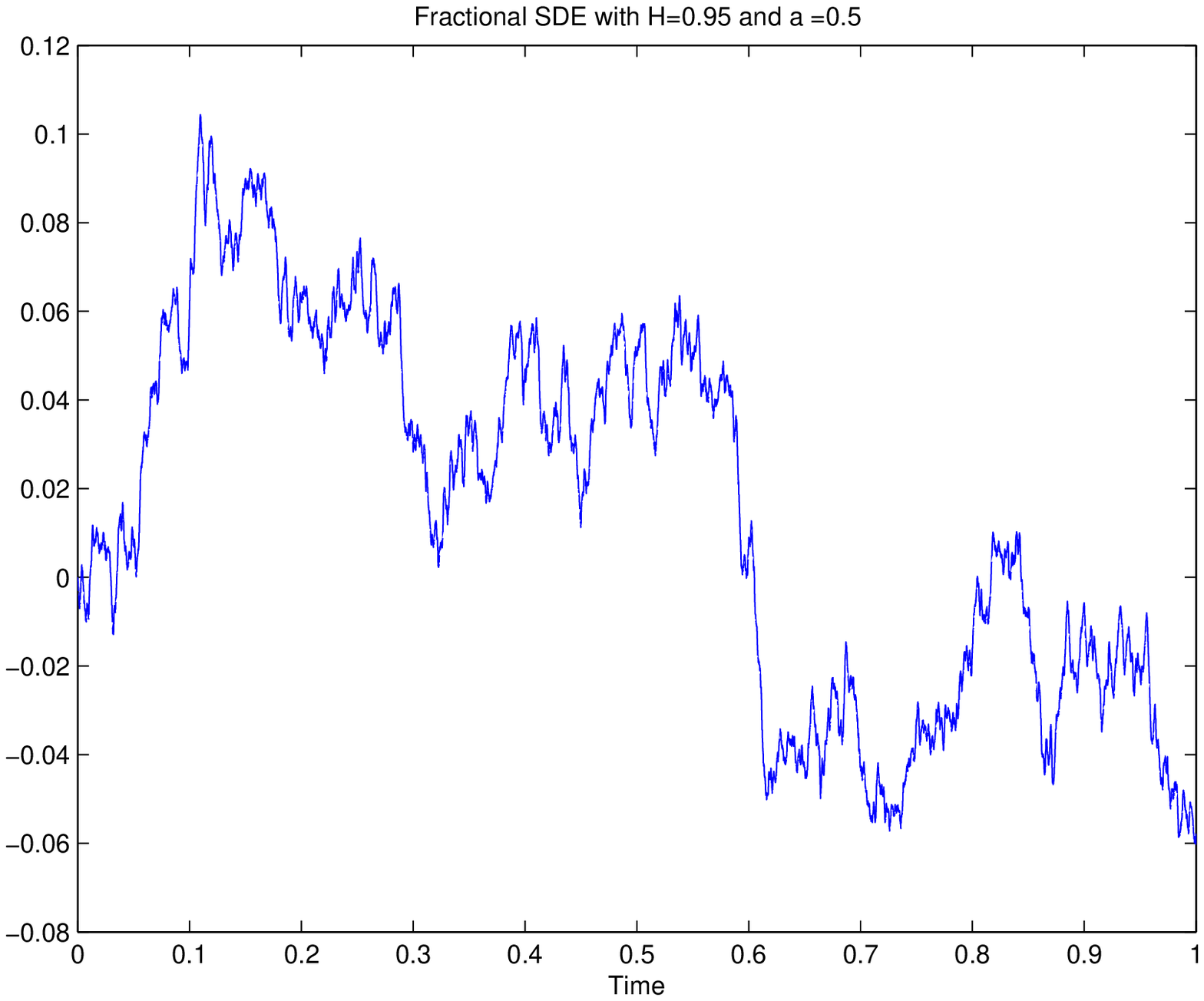}}
\subfigure[$x_t$ for $H=0.95$ and  $\alpha = 0.99$]{\includegraphics[width=45mm]{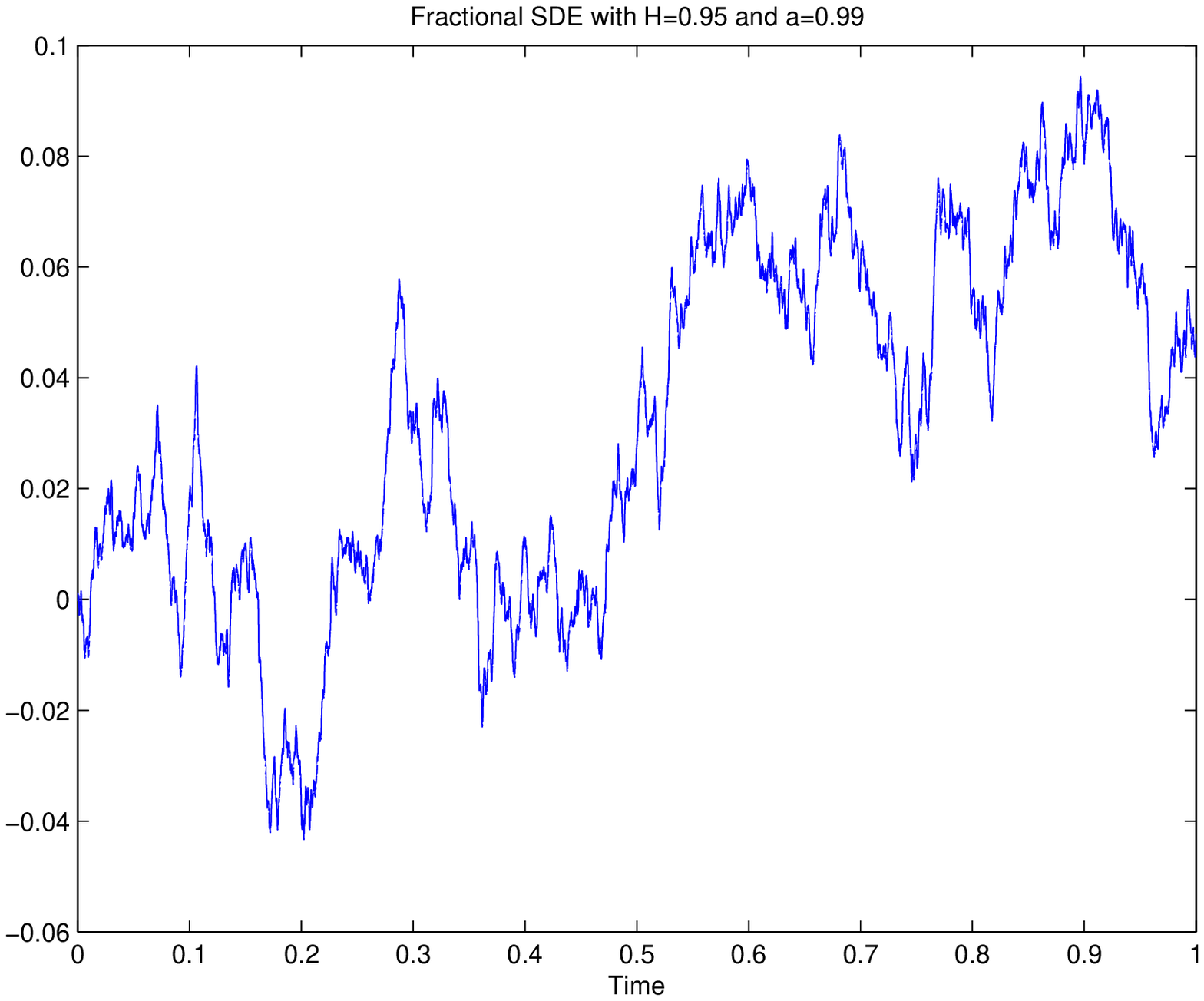}}
\label{}
\end{figure}

\newpage 
{\bf Acknowledgments}

J. Garz\'on was partially supported by the Project HERMES 16930. 
Jorge A. Le\'on was partially supported by 
CONACyT grant 220303. S. Torres was partially supported by the Project PIA ACT1112 C and Fondecyt Grant 1130586. This
work was also developed as part of Inria Chile-CIRICproject  Communication and information research \& innovation center, 10CEII-9157. 
\bibliographystyle{amsplain}

\begin{thebibliography}{99}

\bibitem{AMV} Azmoodeh, Ehsan; Mishura, Yuliya and Valkeila, Esko. On  hedging
European options in  geometric fractional Brownian motion market model. Statistics \&
Decisions  27, (2009), 129-143.



\bibitem{BM}  Bai, Lihua; Ma, Jin.  Stochastic differential equations driven by fractional Brownian motion and Poisson point process. Bernoulli 21 (2015), no. 1, 303-334. 

\bibitem{Lejay} Lejay, Antoine. On the constructions of the skew Brownian motion. Probability Surveys Vol. 3 (2006) 413-466.

\bibitem{MN}  Mishura, Yuliya and    Nualart, David.  Weak solutions for stochastic differential equations with additive fractional noise.  Statistics \& Probability Letters 70 (2004) 253-261

\bibitem{MSV} Mishura, Yuliya; Schevchenko, Georgiy and Valkeila, Esko.
Random variables  as pathwise integrals with  respect to fractional Brownian  motion.
Stochastic Processes and their Applications 123 (2013), 2353-2369.

\bibitem{Nakao} Nakao, S. On the pathwise uniqueness of solutions of one-diemensional stochastic differential equations. Osaka J. Math. 9 (1972), 513-518



\bibitem{Ouknine} Ouknine, Y., 1988. Generalisation da un Lemme de S. Nakao et applications. Stochastics 23, 149-157.

\bibitem{Sam} S.G. Samko, A.A. Kilbas and O.I. Marichev: Fractional Integrals and Derivatives.
Theory and Applications. Gordon and Breach Science Publishers (1993).


\bibitem{zahle} M. Z\"ahle, Integration with respect to fractal functions and stochastic calculus. I. {\it Prob. Theory Relat. Fields} {\bf 111}, 333--374, (1998).

%
\end{thebibliography}

\end{document}